\documentclass{article}

\usepackage{fullpage}
\usepackage{amsthm}
\usepackage{amsmath}
\usepackage{amssymb}
\usepackage{enumerate}

\usepackage[utf8]{inputenc} %for turkish characters

\usepackage{graphicx}
\usepackage{caption}
\usepackage{subcaption}

\captionsetup[subfigure]{labelfont=bf,textfont=normalfont,singlelinecheck=off}
\captionsetup{compatibility=false}
\usepackage[justification=centering]{caption}

\newtheorem{theorem}{Theorem}[section]
\newtheorem{proposition}[theorem]{Proposition}

\newtheorem{corollary}[theorem]{Corollary}

\newtheorem{definition}{Definition}[section]
\newtheorem{example}[theorem]{Example}

\begin{document}

	\title{Generalized Normal Ruled Surface of a Curve in the Euclidean 3-space}
	
	\author{Onur Kaya$^{1}$, Mehmet Önder$^{2}$ \\[3mm] {\it $^{1}$ Manisa Celal Bayar University, Department of Mathematics, 45140, Manisa, Turkey} \\ {\it $^{2}$ Delibekirli Village, Tepe Street No.63, 31440, Kırıkhan, Hatay, Turkey} \\{E-mails: $^{1}$ onur.kaya@cbu.edu.tr, $^{2}$ mehmetonder197999@gmail.com} \date{}}
	
	\maketitle
	
	%%%%%%%%%%%%%%%%%%%%%%%%%%%%%%%%%%%%%%%%%%%%%%%%%%%%%%%%%%%%%%%%%%%%%%%%%%%%%%	
	%%%%%%%%%%%%%%%%%%%%%%%%%%%%%%%%%%%%%%%%%%%%%%%%%%%%%%%%%%%%%%%%%%%%%%%%%%%%%%
	%%%%%%%%%%%%%%%%%%%%%%%%%%%%%%%%%%%%%%%%%%%%%%%%%%%%%%%%%%%%%%%%%%%%%%%%%%%%%%
	%%%%%%%%%%%%%%%%%%%%%%%%%%%%%%%%%%%%%%%%%%%%%%%%%%%%%%%%%%%%%%%%%%%%%%%%%%%%%%
		
	\begin{abstract}
		In this study, we define the generalized normal ruled surface of a curve in the Euclidean 3-space $E^3$. We study the geometry of such surfaces by calculating the Gaussian and mean curvatures to determine when the surface is flat or minimal (equivalently, helicoid). We examine the conditions for the curves lying on this surface to be asymptotic curves, geodesics or lines of curvature. Finally, we obtain the Frenet vectors of generalized normal ruled surface and get some relations with helices and slant ruled surfaces and we give some examples for the obtained results.
	\end{abstract}

	\textbf{AMS Classsification:} 53A05, 53A25.
	
	\textbf{Keywords:} Normal ruled surface; minimal surface; helix; slant ruled surface.
	
	%%%%%%%%%%%%%%%%%%%%%%%%%%%%%%%%%%%%%%%%%%%%%%%%%%%%%%%%%%%%%%%%%%%%%%%%%%%%%%
	%%%%%%%%%%%%%%%%%%%%%%%%%%%%%%%%%%%%%%%%%%%%%%%%%%%%%%%%%%%%%%%%%%%%%%%%%%%%%%
	%%%%%%%%%%%%%%%%%%%%%%%%%%%%%%%%%%%%%%%%%%%%%%%%%%%%%%%%%%%%%%%%%%%%%%%%%%%%%%
	%%%%%%%%%%%%%%%%%%%%%%%%%%%%%%%%%%%%%%%%%%%%%%%%%%%%%%%%%%%%%%%%%%%%%%%%%%%%%%
	
	\section{Introduction}
	Ruled surfaces have an important role in many areas such as architecture, robotics, computer aided geometric design, physics, design problems in spatial mechanism, etc. In 1930, precontraint concrete has been discovered. Then, these surfaces have had an important role in architectural construction and used to construct the spiral stair-cases, roofs, water-towers and chimney-pieces. For instance, Eero Saarinen (1910–1961) used helicoid surface in staircase in General Motor Technical Center in Michigan. He also used ruled surfaces at Yale and M.I.T. buildings. Furthermore, Antonio Gaudí (1852-1926) designed the many pillars of the Sagrada Familia by using hyperbolic hyperboloids. Builder Felix Candela (1910-1997) has made extensive use of cylinders and the most familiar ruled surfaces \cite{Emmer}. Therefore, ruled surfaces have been the focus of study by many mathematicians and different kinds of such surfaces have been defined and studied. One of these kinds is rectifying developable of a curve which defined by Izumiya and Takeuchi as the envelope of the family of rectifying planes of a space curve. They have studied singularities of such surfaces and also given a local classification. They also defined and studied Darboux developable of a space curve whose singularities are given by the locus of the endpoints of modified Darboux vector of the curve \cite{IzumiyaTakeuchislant, IzumiyaKatsumiYamasaki, IzumiyaTakeuchiruled, IzumiyaTakeuchiGeneric, IzumiyaTakeuchigeom}.
	
	Later, Soliman \textit{et al} have made a different definition for the rectifying developable surface \cite{Solimanetal}. They have defined this surface as the surface whose generator line is unit Darboux vector of a space curve. They have obtained that this surface has pointwise 1-type Gauss map of the first kind with a base plane curve if and only if the base curve is a circle or straight line.

    Recently, Önder defined general type of rectifying ruled surfaces as the surface whose rulings always
    lie on the rectifying plane of the base curve \cite{Onderrectifying}. He has obtained many properties of these special ruled surfaces and showed that only the developable rectifying surfaces are the surfaces defined by Izumiya and Takeuchi. 
	
	Furthermore, in \cite{Onder} Önder has defined some new types of ruled surfaces in 3-dimensional Euclidean space which are called  slant ruled surfaces by using the "slant" concept in \cite{IzumiyaTakeuchislant}. Later, Önder and Kaya have given some differential equation characterizations for slant ruled surfaces \cite{KayaOnderchar}.
	
	In this paper, we define a new type of ruled surfaces called generalized normal ruled surfaces in the Euclidean 3-space. We study their Gaussian and mean curvatures, investigate surface curves on generalized normal ruled surfaces and relations with other special ruled surfaces such as slant ruled surfaces.
	
	%%%%%%%%%%%%%%%%%%%%%%%%%%%%%%%%%%%%%%%%%%%%%%%%%%%%%%%%%%%%%%%%%%%%%%%%%%%%%%
	%%%%%%%%%%%%%%%%%%%%%%%%%%%%%%%%%%%%%%%%%%%%%%%%%%%%%%%%%%%%%%%%%%%%%%%%%%%%%%
	%%%%%%%%%%%%%%%%%%%%%%%%%%%%%%%%%%%%%%%%%%%%%%%%%%%%%%%%%%%%%%%%%%%%%%%%%%%%%%
	%%%%%%%%%%%%%%%%%%%%%%%%%%%%%%%%%%%%%%%%%%%%%%%%%%%%%%%%%%%%%%%%%%%%%%%%%%%%%%
	
	\section{Preliminaries}
	A surface $F$ is called to be a ruled surface if it is drawn by the continuous movement of a straight line along a curve $\alpha$. Such surfaces are parameterized as $F_{(\alpha, q)} (s,u): I \times \mathbb{R} \rightarrow \mathbb{R}^3$, $\vec{F}_{(\alpha, q)} (s,u) = \vec{\alpha} (s) + u \vec{q} (s)$ where $\alpha: I \subset \mathbb{R} \rightarrow \mathbb{R}^3$ is called the base curve and $q: \mathbb{R} \rightarrow \mathbb{R}^3-\{0\}$ is called the ruling. If $q$ is unit, the ruled surface $F_{(\alpha, q)}$ is cylindrical if and only if $\vec{q}\hspace{2pt}' = 0$ and non-cylindrical otherwise, where the prime "\hspace{1pt}$'$\hspace{1pt}" shows derivative with respect to $s$. The curve $c=c(s)$ which satisfies the condition $\langle \vec{c}\hspace{2pt}', \vec{q}\hspace{2pt}' \rangle = 0$ is called the striction curve of ruled surface $F_{(\alpha, q)}$. The striction curve has an important geometric meaning such that if there exists a common perpendicular to two constructive rulings, then the foot of the common perpendicular on the main ruling is called a central point and striction curve is the locus of central points.
	
	The Gauss map or the unit surface normal $U$ of the ruled surface $F_{(\alpha, q)}$ is defined by
	\begin{equation} \label{generalunitsurfacenormal}
		\vec{U} (s,u) = \frac{\frac{\partial \vec{F}_{(\alpha, q)}}{\partial s} \times \frac{\partial \vec{F}_{(\alpha, q)}}{\partial u}}{\lVert \frac{\partial \vec{F}_{(\alpha, q)}}{\partial s} \times \frac{\partial \vec{F}_{(\alpha, q)}}{\partial u} \rVert}.
	\end{equation}
	
	If $\frac{\partial \vec{F}_{(\alpha, q)}}{\partial s} \times \frac{\partial \vec{F}_{(\alpha, q)}}{\partial u} = 0$ for some points $(s_0,u_0)$, then such points are called singular points of the surface. Otherwise, they are called regular points. The ruled surface $F_{(\alpha, q)}$ is called to be developable if the surface normal does not change along a ruling $q=q_0$. Non-developable ruled surfaces are called skew surfaces. A ruled surface $F_{(\alpha, q)}$ is developable if and only if $\det(\vec{\alpha}', \vec{q}, \vec{q}\hspace{2pt}') = 0$.
	
	When $\lVert \vec{q} (s) \rVert = 1$, the vectors $\vec{h} (s) = \frac{\vec{q}\hspace{1pt}'(s)}{\lVert \vec{q}\hspace{1pt}'(s) \rVert}$ and $\vec{a} (s) = \vec{q} (s) \times \vec{h} (s)$ are called central normal and central tangent, respectively. The orthonormal frame $\{ \vec{q}, \vec{h}, \vec{a} \}$ is called the Frenet frame of ruled surface $F_{(\alpha, q)}$ \cite{KargerNovak}.
	
	\begin{definition}
		\textnormal{\cite{Onder}} A ruled surface $F_{(\alpha, q)}$ is called a $q$-slant or $a$-slant (respectively, $h$-slant) ruled surface if its ruling $q$ (respectively, central normal $h$) always makes a constant angle with a fixed direction.
	\end{definition}
	
	The first fundamental form $I$ and second fundamental form $II$ of ruled surface $F_{(\alpha, q)}$ are defined by
	\begin{equation} \label{fundamentalforms}
		\begin{split}
			I &= E ds^2 + 2 F ds du + G du^2\\
			II &= L ds^2 + 2 M ds du + N du^2
		\end{split}
	\end{equation}
	where
	\begin{equation} \label{EFGformulas}
		E = \left\langle \frac{\partial \vec{F}_{(\alpha, q)}}{\partial s}, \frac{\partial \vec{F}_{(\alpha, q)}}{\partial s} \right\rangle, \hspace{6pt} F = \left\langle \frac{\partial \vec{F}_{(\alpha, q)}}{\partial s}, \frac{\partial \vec{F}_{(\alpha, q)}}{\partial u} \right\rangle, \hspace{6pt} G = \left\langle \frac{\partial \vec{F}_{(\alpha, q)}}{\partial u}, \frac{\partial \vec{F}_{(\alpha, q)}}{\partial u} \right\rangle
	\end{equation}
	and
	\begin{equation} \label{LMNformulas}
		L = \left\langle \frac{\partial^2 \vec{F}_{(\alpha, q)}}{\partial s^2}, \vec{U} \right\rangle, \hspace{6pt} M = \left\langle \frac{\partial^2 \vec{F}_{(\alpha, q)}}{\partial s \partial u}, \vec{U} \right\rangle, \hspace{6pt} N = \left\langle \frac{\partial^2 \vec{F}_{(\alpha, q)}}{\partial u^2}, \vec{U} \right\rangle.
	\end{equation}
	
	The Gaussian curvature $K$ and mean curvature $H$ are defined by
	\begin{equation} \label{KandHformulas}
		\begin{split}
			K & = \frac{LN-M^2}{EG-F^2}\\
			H & = \frac{EN-2FM+GL}{2(EG-F^2)}
		\end{split}
	\end{equation}
	respectively \cite{DoCarmo}. An arbitrary surface is called the flat surface if $K=0$ and called minimal if $H=0$ at all points of the surface.

	Helicoid (or right helicoid) is a special kind of ruled surfaces which is generated by a line attached orthogonally to an axis such that the line moves along the axis and also rotates, both at constant speed.
	
	The following theorem is known as Catalan theorem \cite{FomenkoTuzhilin}.
	\begin{theorem} \label{helicoidminimaltheorem}
		Among all ruled surfaces except planes only the helicoid and its fragments are minimal.
	\end{theorem}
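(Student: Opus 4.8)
\emph{Proof proposal.} The plan is to reduce the minimality condition $H=0$ to a polynomial identity in the ruling parameter $u$ and then read off the differential equations that force the surface to be a helicoid. First I would exploit the freedom in parametrizing a skew ruled surface $F_{(\alpha,q)}(s,u)=\alpha(s)+u\,q(s)$: take the base curve $\alpha$ to be the striction curve, so that $\langle \alpha',q'\rangle=0$, and reparametrize $s$ so that the spherical image $q$ has unit speed, $\lVert q'\rVert=1$. With the Frenet frame $\{q,h,a\}$ of the surface ($h=q'$ and $a=q\times h$), the structure equations then take the normal form $q'=h$, $h'=-q+\tau a$, $a'=-\tau h$ for a single function $\tau=\langle h',a\rangle$. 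The striction condition means $\alpha'$ has no $h$-component, so I may write $\alpha'=p\,q+\mu\,a$, where $p=\langle\alpha',q\rangle$ and $\mu=\langle\alpha',a\rangle$ governs the distribution (developability) of the surface.

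Next I would compute the fundamental quantities in this frame. A direct calculation gives $E=p^2+\mu^2+u^2$, $F=p$, $G=1$, hence $EG-F^2=\mu^2+u^2$, and, since $F_{uu}=0$, automatically $N=0$; consequently $K=-M^2/(\mu^2+u^2)\le 0$, as expected. Using $F_s\times F_u=\mu h-u a$ I obtain the unit normal $U=(\mu h-u a)/\sqrt{\mu^2+u^2}$, from which $M=\mu/\sqrt{\mu^2+u^2}$ and, after substituting the structure equations into $F_{ss}=\alpha''+u\,q''$,
\[ L=\frac{\mu p-\mu^2\tau-u\,\mu'-u^2\tau}{\sqrt{\mu^2+u^2}}. \]
Because $N=0$, minimality $H=0$ is equivalent to $GL-2FM=0$, i.e.\ $L=2pM$. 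Clearing the common denominator turns this into the requirement that the quadratic polynomial $\tau\,u^2+\mu'\,u+\mu(p+\mu\tau)$ vanish identically in $u$.

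The heart of the argument is then elementary: equating the coefficients of $u^2$, $u^1$, $u^0$ to zero yields $\tau=0$, $\mu'=0$ and $\mu\,p=0$. Thus $\tau\equiv 0$, so $a'=0$ and the central tangent $a$ is a fixed direction, while $\mu$ is constant. The last equation splits into two cases. If $\mu=0$, the surface is developable and one checks directly that $L=M=N=0$, so the second fundamental form vanishes and $F_{(\alpha,q)}$ is part of a plane. If instead $\mu\neq 0$, then $p\equiv 0$, so $\alpha'=\mu a$ with $a$ constant: the striction curve $\alpha(s)=\alpha(0)+\mu s\,a$ is a straight line, and along it the ruling $q$, always orthogonal to the axis $a$ since $\langle q,a\rangle=0$, rotates at unit angular speed. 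This is exactly the description of a right helicoid, and restricting the domain recovers its fragments, so apart from the plane no other minimal ruled surface survives.

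The step I expect to be the main obstacle is not the coefficient extraction but the careful treatment of the degenerate strata, so that the statement's exceptions are honestly accounted for. One must separately dispose of the cylindrical case $q'=0$ (which the unit-speed normalization silently excludes, and for which a cylinder is minimal only when flat in both directions, hence planar) and the developable case $\mu=0$; and one must then verify that the surviving surface is genuinely \emph{congruent} to the classical helicoid rather than merely satisfying its differential equations. Pinning down this identification, by choosing the axis, fixing the pitch $\mu$ and confirming the constant-speed rotation of the generating line, is where the argument demands the most care.
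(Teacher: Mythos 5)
The paper does not prove this statement at all: it is quoted as the classical Catalan theorem, with a citation to Fomenko--Tuzhilin, and is then used only as a black box (to conclude that minimal GNR-surfaces are planes or helicoids). So there is no internal proof to compare yours against; what you have written is a self-contained proof of a result the paper merely cites, and it is essentially the standard one. Your computations check out: with the striction curve as base, $\|q'\|=1$, $\alpha'=p\,q+\mu\,a$, and the structure equations $q'=h$, $h'=-q+\tau a$, $a'=-\tau h$, one indeed gets $E=p^2+\mu^2+u^2$, $F=p$, $G=1$, $N=0$, $M=\mu/\sqrt{\mu^2+u^2}$ and your expression for $L$; since $N=0$, minimality reduces to $L=2pM$, i.e.\ to the identical vanishing in $u$ of $\tau u^2+\mu'u+\mu(p+\mu\tau)$, forcing $\tau=0$, $\mu'=0$, $\mu p=0$. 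The case $\mu=0$ kills the whole second fundamental form (plane), while $\mu\neq0$ gives $p=0$, constant $a$, a straight striction line in direction $a$, and a unit-speed rotation of $q$ in the plane orthogonal to $a$ --- precisely a right helicoid of pitch $\mu$, up to rigid motion. The only genuine residual gaps are the ones you flag yourself, and they are real but routine: (i) the normalization requires $q'\neq0$, so the cylindrical case needs the separate easy argument (a minimal cylinder has straight cross-sections, hence is planar), and, more delicately, a surface on which $q'$ vanishes only on a proper closed subset needs a connectedness/continuity argument to glue the local conclusions into one plane or one helicoid; (ii) the explicit rigid-motion identification with the standard helicoid, which your description does pin down. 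Writing out those two points would make your argument a complete proof of the theorem the paper leaves to the literature.
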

		
	\begin{definition} \label{NBsurfacesdefinition}
		Let $\alpha: I \subset \mathbb{R} \rightarrow \mathbb{R}^3$ be a smooth curve and $\{ \vec{T}, \vec{N}, \vec{B} \}$ be its Frenet frame. The ruled surfaces $F_{(\alpha, N)}$ and $F_{(\alpha, B)}$ are called principal normal surface and binormal surface of $\alpha$, respectively, which are given by the parametrizations
		\begin{equation} \label{NBsurfaces}
			\vec{F}_{(\alpha, N)} = \vec{\alpha} \pm u \vec{N}
			\hspace{6pt}
			\textnormal{and}
			\hspace{6pt}
			\vec{F}_{(\alpha, B)} = \vec{\alpha} \pm u \vec{B},			
		\end{equation}
		respectively.
	\end{definition}
	
	%%%%%%%%%%%%%%%%%%%%%%%%%%%%%%%%%%%%%%%%%%%%%%%%%%%%%%%%%%%%%%%%%%%%%%%%%%%%%%	
	%%%%%%%%%%%%%%%%%%%%%%%%%%%%%%%%%%%%%%%%%%%%%%%%%%%%%%%%%%%%%%%%%%%%%%%%%%%%%%
	%%%%%%%%%%%%%%%%%%%%%%%%%%%%%%%%%%%%%%%%%%%%%%%%%%%%%%%%%%%%%%%%%%%%%%%%%%%%%%
	%%%%%%%%%%%%%%%%%%%%%%%%%%%%%%%%%%%%%%%%%%%%%%%%%%%%%%%%%%%%%%%%%%%%%%%%%%%%%%
	
	\section{Generalized Normal Ruled Surfaces}
In this section, we define generalized normal ruled surfaces of a curve in the Euclidean 3-space $E^3$.
	
	\begin{definition} \label{normalruledsurfacedefinition}
		Let $\alpha(s): I \subset \mathbb{R} \rightarrow \mathbb{R}^3$ be a unit speed curve with arclength parameter $s$, Frenet frame $\{ \vec{T}, \vec{N}, \vec{B} \}$, curvature $\kappa$ and torsion $\tau$. The ruled surface $F_{(\alpha, q_n)} (s,u): I \times \mathbb{R} \rightarrow \mathbb{R}^3$ defined by
		\begin{equation} \label{normalruledsurfaceequation}
			\vec{F}_{(\alpha, q_n)} (s,u) = \vec{\alpha} (s) + u \vec{q}_n (s), \hspace{12pt} \vec{q}_n (s) = a_1 (s) \vec{N} (s) + a_2 (s) \vec{B} (s)
		\end{equation}
		is called the generalized normal ruled surface (GNR-surface) of $\alpha$ where $a_1^2 + a_2^2 = 1$ and $a_1, a_2$ are smooth functions of arc-length parameter $s$.
	\end{definition}
	
	From Definition \ref{normalruledsurfacedefinition}, we can easily see that if $a_1=0$ and $a_2= \pm 1$, then the GNR-surface $F_{(\alpha, q_n)}$ becomes binormal surface $F_{(\alpha, B)}$. Similarly, if $a_1=\pm 1$ and $a_2=0$, then the GNR-surface $F_{(\alpha, q_n)}$ becomes principal normal surface $F_{(\alpha, N)}$.
	
	Let $f: I \subset \mathbb{R} \rightarrow \mathbb{R}$ and $g: I \times \mathbb{R} \rightarrow \mathbb{R}$ be smooth functions defined by
	\begin{equation} \label{fandgfunctions}
		\begin{split}
				f(s) & = a_1' (s) a_2 (s) - a_1 (s) a_2' (s) - \tau (s)\\
				g(s,u) & = 1 - u a_1 (s) \kappa (s)
		\end{split}
	\end{equation}
	We will call $f$ and $g$ as the characterization functions of GNR-surface $F_{(\alpha, q_n)}$ and give the many properties of the surface with respect to $f$ and $g$.
	\begin{theorem} \label{regulartheorem}
		The surface $F_{(\alpha, q_n)}$ is not regular if and only if $f=g=0$.
	\end{theorem}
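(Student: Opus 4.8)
The plan is to compute the cross product $\frac{\partial \vec{F}_{(\alpha,q_n)}}{\partial s} \times \frac{\partial \vec{F}_{(\alpha,q_n)}}{\partial u}$ explicitly in the Frenet frame $\{\vec{T}, \vec{N}, \vec{B}\}$ and show that its vanishing is controlled exactly by the two characterization functions $f$ and $g$. By the definition recalled after \eqref{generalunitsurfacenormal}, a point is singular precisely when this cross product is zero, so the whole theorem reduces to identifying when that vector vanishes.

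First I would record the two partial derivatives. Since $\frac{\partial \vec{F}}{\partial u} = \vec{q}_n = a_1\vec{N} + a_2\vec{B}$, this one is immediate. For $\frac{\partial \vec{F}}{\partial s} = \vec{T} + u\,\vec{q}_n'$, I would differentiate $\vec{q}_n$ using the Frenet formulas $\vec{T}' = \kappa\vec{N}$, $\vec{N}' = -\kappa\vec{T} + \tau\vec{B}$, $\vec{B}' = -\tau\vec{N}$, and collect terms. This yields
\[
\frac{\partial \vec{F}}{\partial s} = g\,\vec{T} + u(a_1' - a_2\tau)\,\vec{N} + u(a_1\tau + a_2')\,\vec{B},
\]
where the tangential coefficient is exactly $g = 1 - u a_1 \kappa$ from \eqref{fandgfunctions}.

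Next I would take the cross product using the right-handed relations $\vec{T}\times\vec{N} = \vec{B}$, $\vec{N}\times\vec{B} = \vec{T}$, $\vec{B}\times\vec{T} = \vec{N}$. The normal and binormal components come out as $-g a_2$ and $g a_1$, while the tangential coefficient $u(a_1'-a_2\tau)a_2 - u(a_1\tau+a_2')a_1$ simplifies, using $a_1^2 + a_2^2 = 1$, to $u(a_1'a_2 - a_1 a_2' - \tau) = u f$. Hence
\[
\frac{\partial \vec{F}}{\partial s} \times \frac{\partial \vec{F}}{\partial u} = u f\,\vec{T} - g a_2\,\vec{N} + g a_1\,\vec{B},
\]
and taking the squared norm (again invoking $a_1^2 + a_2^2 = 1$) gives the clean expression $u^2 f^2 + g^2$.

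Finally, the surface fails to be regular exactly where this norm vanishes, i.e. where $u f = 0$ and $g = 0$ hold simultaneously. The only place I expect to require any care is disposing of the stray factor $u$: if $g = 0$ then $u a_1 \kappa = 1$, which forces $u \neq 0$, so the condition $u f = 0$ collapses to $f = 0$; conversely, $f = g = 0$ makes $u^2 f^2 + g^2$ vanish trivially. This establishes the claimed equivalence, the substantive content being the bookkeeping of the Frenet-frame cross product rather than any deep obstacle.
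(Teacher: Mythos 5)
Your proof is correct and takes essentially the same route as the paper: compute the two partial derivatives in the Frenet frame, form the cross product $uf\,\vec{T} - a_2 g\,\vec{N} + a_1 g\,\vec{B}$, and identify its vanishing with $f = g = 0$. You are in fact slightly more careful than the paper, which passes silently from $uf = 0$ to $f = 0$; your observation that $g = 0$ forces $u a_1 \kappa = 1$, hence $u \neq 0$, explicitly closes that small gap.
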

	\begin{proof}
		From the partial derivatives of equation (\ref{normalruledsurfaceequation}) we get
		\begin{equation} \label{nonunitsurfacenormal}
			\begin{split}
				\frac{\partial \vec{F}_{(\alpha, q_n)}}{\partial s} & = g \vec{T} + u (a_1'-a_2 \tau) \vec{N} + u (a_1 \tau + a_2') \vec{B}\\
				\frac{\partial \vec{F}_{(\alpha, q_n)}}{\partial u} & = a_1 \vec{N} + a_2 \vec{B}
			\end{split}
		\end{equation}
		Then, from the cross product of the last equations it follows
		\begin{equation*}
			\frac{\partial \vec{F}_{(\alpha, q_n)}}{\partial s} \times \frac{\partial \vec{F}_{(\alpha, q_n)}}{\partial u} = u f \vec{T} - a_2 g \vec{N} + a_1 g \vec{B}
		\end{equation*}
		and we have that $f=g=0$ if and only if $\frac{\partial \vec{F}_{(\alpha, q_n)}}{\partial s} \times \frac{\partial \vec{F}_{(\alpha, q_n)}}{\partial u} = 0$.
	\end{proof}

This theorem allows to determinate the singular points of the GNR-surfaces as follows.
	
	\begin{proposition} \label{singularpoints}
		If the surface has singular points and $\alpha$ is not a plane curve, for $a_1, a_2 \ne 0$, the locus of the singular points of the GNR-surface $F_{(\alpha, q_n)}$ is given by the curve $\gamma$ defined by
		\begin{equation*}
			\vec{\gamma} (s) = \vec{\alpha} (s) + u (s) \vec{q}_n (s)
		\end{equation*}
		where
		\begin{equation*}
			u (s) = \frac{1}{a_2 \kappa \int \frac{\tau}{a_2^2} ds}.
		\end{equation*}
	\end{proposition}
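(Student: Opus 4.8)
The plan is to pin down the singular points by imposing simultaneously the two conditions $f=0$ and $g=0$ supplied by Theorem \ref{regulartheorem}, and then to eliminate the auxiliary function $a_1$ in favour of $a_2$, $\kappa$ and $\tau$ so as to recover the stated expression for $u(s)$.

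First I would read the value of $u$ off the equation $g=0$. Since $g(s,u)=1-u a_1\kappa$, the condition $g=0$ gives immediately $u=\tfrac{1}{a_1\kappa}$; this already uses the hypothesis $a_1\neq 0$, for otherwise $g\equiv 1$ and the surface would have no singular points at all. Thus on each ruling the only candidate singular point sits at height $u(s)=\tfrac{1}{a_1(s)\kappa(s)}$.

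Next I would exploit the condition $f=0$, that is $a_1'a_2-a_1a_2'=\tau$. The key observation is that, because $a_2\neq 0$, dividing by $a_2^2$ turns the left-hand side into a perfect derivative by the quotient rule,
\[
\frac{a_1'a_2-a_1a_2'}{a_2^2}=\frac{d}{ds}\!\left(\frac{a_1}{a_2}\right)=\frac{\tau}{a_2^2}.
\]
Integrating and absorbing the constant of integration gives $\tfrac{a_1}{a_2}=\int \tfrac{\tau}{a_2^2}\,ds$, hence $a_1=a_2\int \tfrac{\tau}{a_2^2}\,ds$. Substituting this into $u=\tfrac{1}{a_1\kappa}$ yields exactly $u(s)=\tfrac{1}{a_2\kappa\int \frac{\tau}{a_2^2}\,ds}$, and the singular locus is then the curve $\vec{\gamma}(s)=\vec{\alpha}(s)+u(s)\vec{q}_n(s)$, as claimed.

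The subtlety I would be most careful about is the logical status of the condition $f=0$. Since $f$ is a function of $s$ alone, the surface can carry a one-parameter family of singular points only when $f\equiv 0$ holds identically on $I$; the hypothesis that the surface ``has singular points'' must be read in this way, and it is this identity that I integrate above. Finally I would check that the nondegeneracy hypotheses match the formula: the assumption that $\alpha$ is not a plane curve gives $\tau\not\equiv 0$, so that $\int \tfrac{\tau}{a_2^2}\,ds$ is nonconstant and, together with $a_1,a_2\neq 0$, keeps the denominator of $u(s)$ from vanishing. This is the only genuinely delicate point; the remaining steps are the routine substitutions indicated above.
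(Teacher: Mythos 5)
Your proposal is correct and follows essentially the same route as the paper's own proof: impose $f=g=0$ from Theorem \ref{regulartheorem}, integrate $f=0$ via the quotient-rule identity to get $a_1 = a_2 \int \frac{\tau}{a_2^2}\,ds$, solve $g=0$ for $u$, and eliminate $a_1$. Your additional remarks (making the perfect-derivative step explicit, noting that $f=0$ must hold identically in $s$, and checking the nondegeneracy hypotheses) are refinements of detail rather than a different argument.
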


	\begin{proof}
		From Theorem \ref{regulartheorem}, for the singular points of $F_{(\alpha, q_n)}$ we have
		\begin{equation*}
			\begin{split}
				a_1' (s) a_2 (s) - a_1 (s) a_2' (s) - \tau (s) & = 0,\\
				1 - u a_1 (s) \kappa (s) & = 0.
			\end{split}
		\end{equation*}
		From the first equation of this system, we get $a_1 = a_2 \int \frac{\tau}{a_2^2} ds$ and from the second equation, we get $a_1=\frac{1}{u \kappa}$. By eliminating $a_1$, we obtain the desired result.
	\end{proof}
	
	Since we have $\vec{q}_n (s) = a_1 (s) \vec{N} (s) + a_2 (s) \vec{B} (s)$, by the derivation of the ruling with respect to $s$, it follows
	\begin{equation*}
		\vec{q}_n\hspace{-3.5pt}' = -a_1 \kappa \vec{T} + (a_1' - a_2 \tau) \vec{N} + (a_1 \tau + a_2') \vec{B}.
	\end{equation*}
	Then, the ruling is constant, i.e., $\vec{q}_n\hspace{-3.5pt}' = 0$ if and only if the following system hold
	\begin{equation*}
		\begin{cases}
			a_1 \kappa = 0,\\
			a_1' - a_2 \tau = 0,\\
			a_1 \tau + a_2' = 0.
		\end{cases}
	\end{equation*}
	This system is reduced to
	\begin{equation} \label{cylindrical}
		\begin{cases}
			\kappa = 0,\\
			a_1 = \textnormal{constant},\\
			a_2 = \textnormal{constant}.
		\end{cases}
	\end{equation}
	when $a_1 \ne 0$. In this case, since $\kappa = 0$, we have that the tangent vector $\vec{T}$ is constant. Moreover, the principal normal vector $\vec{N}$ and binormal vector $\vec{B}$ are also constant vectors based on choice which are perpendicular to $\vec{T}$. Since both the tangent of the base curve $\alpha$ and the ruling are constant, the GNR-surface $F_{(\alpha, q_n)}$ becomes a plane. This gives the following corollary:
	
	\begin{corollary} \label{cylindricalcorollary}
		If $a_1 \ne 0$, among all GNR-surfaces $F_{(\alpha, q_n)}$ only the plane is cylindrical.
	\end{corollary}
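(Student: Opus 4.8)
The plan is to read the corollary off the cylindrical criterion recorded in the Preliminaries together with the reduced system (\ref{cylindrical}). A ruled surface with unit ruling is cylindrical exactly when $\vec{q}_n' = 0$, and the preceding computation already shows that, when $a_1 \ne 0$, the equation $\vec{q}_n' = 0$ is equivalent to the three conditions $\kappa = 0$, $a_1 = \text{const}$, $a_2 = \text{const}$. So the first step is simply to observe that the whole content of the statement is to translate these data into a geometric conclusion: that a GNR-surface carrying them is a plane, and conversely that the plane is the only such surface. No new computation is required beyond what leads to (\ref{cylindrical}); the task is one of interpretation.

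The central step is the implication $\kappa = 0 \Rightarrow \text{plane}$. Since $\kappa \equiv 0$, the base curve $\alpha$ is an arc of a straight line, so its tangent $\vec{T}$ is a fixed unit vector. Choosing a constant orthonormal completion $\{\vec{T}, \vec{N}, \vec{B}\}$ along this line (with $\tau = 0$), the ruling $\vec{q}_n = a_1\vec{N} + a_2\vec{B}$ is itself a constant direction, because $a_1, a_2$ are constant. Hence the parametrization $\vec{F}_{(\alpha, q_n)}(s,u) = \vec{\alpha}(s) + u\,\vec{q}_n$ sweeps out the affine plane spanned by the two constant, orthogonal (hence linearly independent) vectors $\vec{T}$ and $\vec{q}_n$, which is exactly a plane. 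The converse direction needed for the word ``only'' is immediate: on such a plane the data are constant and $\kappa = 0$, so $\vec{q}_n' = 0$ and the surface is cylindrical; therefore the plane is the unique cylindrical GNR-surface when $a_1 \ne 0$.

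The main obstacle is not a calculation but a point of care: the Frenet frame degenerates precisely where $\kappa = 0$, since $\vec{N} = \vec{T}'/\lVert \vec{T}'\rVert$ is then undefined intrinsically. One must therefore justify passing to a constant adapted frame along the line and setting $\tau = 0$, which is what allows the remaining two components $a_1' - a_2\tau = 0$ and $a_1\tau + a_2' = 0$ of $\vec{q}_n' = 0$ to collapse to $a_1' = a_2' = 0$, giving $a_1, a_2$ constant as claimed in (\ref{cylindrical}). Once this conventional choice of frame along the straight base curve is fixed, the geometric conclusion follows directly and the corollary is established.
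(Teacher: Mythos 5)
Your proposal is correct and follows essentially the same route as the paper: both use the criterion that cylindrical means $\vec{q}_n' = 0$, reduce (via $a_1 \ne 0$) to $\kappa = 0$ with $a_1, a_2$ constant under the conventional choice of a constant frame with $\tau = 0$ along the straight base curve, and conclude that the surface is the plane spanned by the constant vectors $\vec{T}$ and $\vec{q}_n$. Your explicit attention to the degeneracy of the Frenet frame at $\kappa = 0$ matches the paper's remark that $\vec{N}$ and $\vec{B}$ are then ``constant vectors based on choice,'' so nothing is missing.
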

	
	 In the case that $a_1=0$, the surface becomes binormal surface $F_{(\alpha, B)}$ and we get $\tau = 0$ which means that $\alpha$ is a planar curve. For this case we can give the following corollary
	
	\begin{corollary}
		The binormal surface $F_{(\alpha, B)}$ is cylindrical if and only if $\tau = 0$, i.e., $\alpha$ is a planar curve.
	\end{corollary}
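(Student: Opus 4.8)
The plan is to specialize the general expression for $\vec{q}_n\hspace{-3.5pt}'$ already computed in the excerpt to the binormal case and simply read off when it vanishes. Recall that the binormal surface $F_{(\alpha, B)}$ is obtained from the GNR-surface by taking $a_1 = 0$ and $a_2 = \pm 1$, so that the unit ruling is $\vec{q}_n = \pm \vec{B}$. As recorded in the Preliminaries, a ruled surface with unit ruling is cylindrical if and only if the derivative of its ruling vanishes, i.e. $\vec{q}_n\hspace{-3.5pt}' = 0$; hence the entire statement reduces to checking this single condition.

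First I would substitute $a_1 = 0$, $a_2 = \pm 1$ (so that $a_1' = a_2' = 0$) into the formula
\[
\vec{q}_n\hspace{-3.5pt}' = -a_1 \kappa \vec{T} + (a_1' - a_2 \tau) \vec{N} + (a_1 \tau + a_2') \vec{B}
\]
derived immediately before the corollary. The tangential and binormal components drop out identically, leaving $\vec{q}_n\hspace{-3.5pt}' = \mp \tau \vec{N}$. Since $\vec{N}$ is a unit vector, this shows at once that $\vec{q}_n\hspace{-3.5pt}' = 0$ if and only if $\tau = 0$, which establishes the cylindricity criterion.

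To finish, I would invoke the standard characterization of plane curves: a unit-speed curve with nonvanishing curvature lies in a plane if and only if its torsion is identically zero. Combining this with the previous step yields the chain of equivalences: $F_{(\alpha, B)}$ is cylindrical $\iff \vec{q}_n\hspace{-3.5pt}' = 0 \iff \tau = 0 \iff \alpha$ is planar, which is exactly the claim.

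The statement is essentially immediate, so I do not expect any genuine obstacle; the only point requiring a moment's care is the ambiguous sign arising from the choice $a_2 = \pm 1$ in the binormal ruling, but because the conclusion depends only on whether $\vec{q}_n\hspace{-3.5pt}'$ vanishes, that sign is immaterial.
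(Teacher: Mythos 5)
Your proposal is correct and takes essentially the same route as the paper: the paper likewise specializes the formula for $\vec{q}_n\hspace{-3.5pt}'$ (equivalently, the system $a_1\kappa=0$, $a_1'-a_2\tau=0$, $a_1\tau+a_2'=0$) to the case $a_1=0$, $a_2=\pm 1$, concluding that cylindricity forces $-a_2\tau\vec{N}=0$, i.e. $\tau=0$ and $\alpha$ planar. Your explicit remarks about the sign ambiguity and the nonvanishing-curvature hypothesis behind ``$\tau=0$ iff planar'' are minor refinements the paper leaves implicit.
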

	
	The striction parameter of the GNR-surface $F_{(\alpha, q_n)}$ can be achieved by
	\begin{equation} \label{strictionparameter}
		u(s) = - \frac{\langle \vec{\alpha}', \vec{q}_n\hspace{-3.5pt}' \rangle}{\langle \vec{q}_n\hspace{-3.5pt}', \vec{q}_n\hspace{-3.5pt}' \rangle} = \frac{a_1 \kappa}{a_1^2 \kappa^2 + (a_1' - a_2 \tau)^2 + (a_1 \tau + a_2')^2}.
	\end{equation}
	If $a_1=0$, then we have $\vec{q}_n = \pm \vec{B}$. Thus, $\langle \vec{\alpha}', \vec{q}_n\hspace{-3.5pt}' \rangle = \langle \vec{T}, \mp \tau \vec{N} \rangle = 0$. Therefore, we have the following corollary:
			
	\begin{corollary}
		The base curve $\alpha$ of surface $F_{(\alpha, q_n)}$ is its striction curve if and only if $F_{(\alpha, q_n)}=F_{(\alpha, B)}$ or $\alpha$ is a straight line.
	\end{corollary}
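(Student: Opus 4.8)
The plan is to characterize when the base curve coincides with the striction curve by examining when the striction parameter $u(s)$ in (\ref{strictionparameter}) vanishes identically. Since the striction curve is $\vec{c}(s) = \vec{\alpha}(s) + u(s)\vec{q}_n(s)$, the base curve $\alpha$ is itself the striction curve precisely when $u(s) \equiv 0$. As long as the surface is non-cylindrical, the denominator $\langle \vec{q}_n\hspace{-3.5pt}', \vec{q}_n\hspace{-3.5pt}'\rangle = \lVert \vec{q}_n\hspace{-3.5pt}'\rVert^2$ in (\ref{strictionparameter}) is strictly positive, so $u(s) \equiv 0$ is equivalent to the vanishing of the numerator $\langle \vec{\alpha}', \vec{q}_n\hspace{-3.5pt}'\rangle$.

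Next I would compute this inner product directly. Using $\vec{\alpha}' = \vec{T}$ together with the already-established expression $\vec{q}_n\hspace{-3.5pt}' = -a_1\kappa\vec{T} + (a_1' - a_2\tau)\vec{N} + (a_1\tau + a_2')\vec{B}$, orthonormality of the Frenet frame gives $\langle \vec{\alpha}', \vec{q}_n\hspace{-3.5pt}'\rangle = -a_1\kappa$. Hence the base curve is the striction curve if and only if $a_1\kappa \equiv 0$, which matches the numerator $a_1\kappa$ appearing in (\ref{strictionparameter}).

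It then remains to translate $a_1\kappa \equiv 0$ into the geometric alternatives of the statement. The product vanishes exactly when $a_1 \equiv 0$ or $\kappa \equiv 0$. In the first case $\vec{q}_n = \pm\vec{B}$, so $F_{(\alpha, q_n)} = F_{(\alpha, B)}$ is the binormal surface; this is precisely the computation already recorded above, where $\langle \vec{\alpha}', \vec{q}_n\hspace{-3.5pt}'\rangle = \langle \vec{T}, \mp\tau\vec{N}\rangle = 0$. In the second case $\kappa \equiv 0$ forces $\alpha$ to be a straight line. The converse directions follow by reading the same equivalences backwards: a binormal surface has $a_1 = 0$ and a straight-line base has $\kappa = 0$, and either makes $a_1\kappa$ vanish.

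I expect the main obstacle to be the bookkeeping of degenerate cases rather than the core computation. First, the formula for $u(s)$ presupposes a non-cylindrical surface; the cylindrical situation is a plane by Corollary \ref{cylindricalcorollary}, which I would dispose of separately. Second, from $a_1\kappa \equiv 0$ one obtains only pointwise vanishing, so to land cleanly in one of the two global alternatives I would either invoke smoothness of $a_1$ and $\kappa$ or note that on any maximal subinterval where $\kappa \ne 0$ one must have $a_1 = 0$, so that the surface restricts to a binormal surface there. Finally, the straight-line branch requires a remark that the Frenet frame is degenerate when $\kappa \equiv 0$, so that alternative is best read as the limiting case rather than a genuine GNR-surface.
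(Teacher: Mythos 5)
Your proposal is correct and follows essentially the same route as the paper: both rest on the striction parameter formula (\ref{strictionparameter}), the computation $\langle \vec{\alpha}', \vec{q}_n\hspace{-3.5pt}'\rangle = -a_1\kappa$, and the case split $a_1 = 0$ (binormal surface) versus $\kappa = 0$ (straight line). Your additional remarks on the cylindrical case and on pointwise versus global vanishing of $a_1\kappa$ are more careful than the paper's own (very terse) argument, but they do not change the approach.
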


	\begin{proposition} \label{developableproposition}
		The GNR-surface $F_{(\alpha, q_n)}$ is developable if and only if $f=0$.
	\end{proposition}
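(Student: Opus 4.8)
The plan is to apply the developability criterion recorded in the Preliminaries, namely that a ruled surface $F_{(\alpha, q)}$ is developable if and only if $\det(\vec{\alpha}', \vec{q}, \vec{q}\hspace{2pt}') = 0$, taking $q = q_n$. Since all three vectors are naturally expressed in the Frenet frame $\{\vec{T}, \vec{N}, \vec{B}\}$, which is orthonormal and positively oriented, the scalar triple product reduces to the ordinary $3 \times 3$ determinant of their components in this basis, so no further normalization is needed.

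First I would record the three rows. We have $\vec{\alpha}' = \vec{T}$, with component vector $(1,0,0)$; the ruling $\vec{q}_n = a_1 \vec{N} + a_2 \vec{B}$ has components $(0, a_1, a_2)$; and the derivative $\vec{q}_n'$, already computed just above this proposition, equals $-a_1 \kappa \vec{T} + (a_1' - a_2 \tau)\vec{N} + (a_1 \tau + a_2')\vec{B}$, giving the row $(-a_1 \kappa,\, a_1' - a_2 \tau,\, a_1 \tau + a_2')$.

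Next I would expand the determinant along the first row. Because that row is $(1,0,0)$, the expansion collapses to the $2 \times 2$ minor in the $\vec{N}, \vec{B}$ coordinates, yielding $a_1(a_1 \tau + a_2') - a_2(a_1' - a_2 \tau)$. The key step is then to invoke the defining constraint $a_1^2 + a_2^2 = 1$: the terms $a_1^2 \tau + a_2^2 \tau$ combine into a single $\tau$, and what remains is $\tau + a_1 a_2' - a_1' a_2 = -(a_1' a_2 - a_1 a_2' - \tau) = -f$, exactly the negative of the first characterization function. Hence $\det(\vec{\alpha}', \vec{q}_n, \vec{q}_n') = -f$, and the criterion gives developability if and only if $f = 0$.

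There is no serious obstacle here; the computation is short and direct. The only point requiring care is keeping track of signs and correctly using $a_1^2 + a_2^2 = 1$ so that the two $\tau$-terms merge into the single $\tau$ appearing in $f$. Alternatively, one could read off the same conclusion from the cross product $\frac{\partial \vec{F}_{(\alpha, q_n)}}{\partial s} \times \frac{\partial \vec{F}_{(\alpha, q_n)}}{\partial u} = u f \vec{T} - a_2 g \vec{N} + a_1 g \vec{B}$ already established in the proof of Theorem \ref{regulartheorem}, by checking when this unnormalized normal keeps a constant direction along a fixed ruling; but the triple-product route is the most economical.
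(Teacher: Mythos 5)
Your proposal is correct and follows exactly the paper's own route: applying the criterion $\det(\vec{\alpha}', \vec{q}_n, \vec{q}_n\hspace{2pt}') = 0$, computing the triple product in the Frenet frame, and simplifying via $a_1^2 + a_2^2 = 1$ to obtain $-f$. The only difference is that you spell out the row expansion and the merging of the $\tau$-terms, which the paper leaves implicit.
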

	\begin{proof}
		A ruled surface $F_{(\alpha, q)}$ is developable if and only if $\det(\vec{\alpha}', \vec{q}, \vec{q}\hspace{2pt}') = 0$. Then, we get
		\begin{equation} \label{developable}
			\begin{split}
				\det(\vec{\alpha}', \vec{q}_n, \vec{q}_n\hspace{-3.5pt}') & = \det(\vec{T}, a_1 \vec{N} + a_2 \vec{B}, -a_1 \kappa \vec{T} + (a_1' - a_2 \tau) \vec{N} + (a_1 \tau + a_2') \vec{B})\\
				& = a_1 a_2'-a_1' a_2 + \tau\\
				& = -f
			\end{split}
		\end{equation}
		From (\ref{developable}), it is clear that $\det(\vec{\alpha}', \vec{q}_n, \vec{q}_n\hspace{-3.5pt}')=0$ if and only if $f=0$.
	\end{proof}
	
	Considering Theorem \ref{regulartheorem} and Proposition \ref{developableproposition}, the following corollary is obtained.
	
	\begin{corollary}
		A developable GNR-surface $F_{(\alpha, q_n)}$ is always regular if and only if $g \ne 0$.
	\end{corollary}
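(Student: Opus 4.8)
The plan is to derive this corollary purely as a logical consequence of the two results immediately preceding it, namely Theorem \ref{regulartheorem} (a point is singular if and only if $f=g=0$) and Proposition \ref{developableproposition} ($F_{(\alpha, q_n)}$ is developable if and only if $f=0$); no new computation with the frame or the fundamental forms should be needed.

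First I would invoke the developability hypothesis. Since we are told $F_{(\alpha, q_n)}$ is developable, Proposition \ref{developableproposition} gives $f=0$ identically, i.e. $f(s)=0$ for every $s\in I$. This is the key reduction: the characterization function $f$, which depends only on $s$, is forced to vanish everywhere on the surface.

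Next I would feed this into Theorem \ref{regulartheorem}. That theorem states that the surface fails to be regular exactly at those points $(s,u)$ where both $f=0$ and $g=0$. Because developability has already pinned $f=0$ everywhere, the joint condition $f=g=0$ collapses to the single condition $g(s,u)=0$. Hence a point $(s,u)$ of a developable GNR-surface is singular if and only if $g(s,u)=0$, and conversely it is regular if and only if $g(s,u)\neq 0$. Quantifying over the whole surface then yields the statement ``always regular'' $\iff$ $g\neq 0$, which is precisely the claim.

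I do not expect a genuine obstacle here, since the argument is a two-line specialization of the prior results; the only point demanding care is the bookkeeping of quantifiers, namely that $f$ is a function of $s$ alone while $g=g(s,u)$ varies with the ruling parameter, so that the phrase ``always regular'' must be read as ``regular at every $(s,u)$'' and matched against ``$g\neq 0$ at every $(s,u)$.'' Making this quantifier reading explicit is the one spot where a careless statement could introduce an error, so I would phrase the conclusion pointwise first and only then pass to the global statement.
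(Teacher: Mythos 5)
Your proposal is correct and is exactly the argument the paper intends: the corollary is stated as an immediate consequence of Theorem \ref{regulartheorem} and Proposition \ref{developableproposition}, with developability forcing $f\equiv 0$ so that singularity reduces to $g=0$. Your extra care with the quantifiers (that $f$ depends on $s$ alone while $g=g(s,u)$) is a sound refinement of what the paper leaves implicit.
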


	The Gaussian map or the unit surface normal $U$ of the GNR-surface $F_{(\alpha, q_n)}$ can easily be calculated from (\ref{generalunitsurfacenormal}) as
	\begin{equation} \label{unitsurfacenormal}
		\vec{U} (s,u) = \frac{1}{\sqrt{u^2 f^2 + g^2}} \left( u f \vec{T} - a_2 g \vec{N} + a_1 g \vec{B} \right).
	\end{equation}
	Considering base curve $\alpha$ on $F_{(\alpha, q_n)}$, the unit surface normal $\vec{U}$ can be restricted to $\alpha$ by taking $u=0$, i.e.,
	\begin{equation}
		\vec{U}_\alpha = -a_2 \vec{N} + a_1 \vec{B}.
	\end{equation}
	Then, we can give the followings:
	\begin{proposition}
		(i) The base curve $\alpha$ is a geodesic if and only if $\alpha$ is a straight line or $F_{(\alpha, q_n)}$=$F_{(\alpha, B)}$.
		(ii) The base curve $\alpha$ is an aymptotic curve if and only if $\alpha$ is a straight line or $F_{(\alpha, q_n)}$=$F_{(\alpha, N)}$.
	\end{proposition}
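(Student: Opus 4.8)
The plan is to apply the standard differential-geometric characterizations of geodesics and asymptotic curves, evaluated along the base curve. Recall that a unit-speed curve on a surface is a geodesic exactly when its acceleration $\vec\alpha''$ has vanishing tangential component, i.e. when $\vec\alpha''$ is \emph{parallel} to the surface normal; and it is an asymptotic curve exactly when its normal curvature vanishes, i.e. when $\vec\alpha''$ is \emph{orthogonal} to the surface normal, $\langle\vec\alpha'', \vec U\rangle = 0$. Since both tests are taken along $\alpha$, the relevant normal is the restricted Gauss map $\vec U_\alpha = -a_2\vec N + a_1\vec B$ computed above. The first step is to use the Frenet equations for the unit-speed base curve: from $\vec\alpha' = \vec T$ and $\vec T' = \kappa\vec N$ we get $\vec\alpha'' = \kappa\vec N$, which reduces both claims to elementary conditions in the $\{\vec N, \vec B\}$-plane.

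For part (i) I would impose that $\vec\alpha'' = \kappa\vec N$ be parallel to $\vec U_\alpha = -a_2\vec N + a_1\vec B$. As $\kappa\vec N$ points purely along $\vec N$ while $\vec U_\alpha$ generally carries a $\vec B$-component, parallelism forces either $\kappa = 0$ (so $\vec\alpha'' = 0$ and $\alpha$ is a straight line) or $a_1 = 0$ (so $\vec U_\alpha = \mp\vec N$ is itself along $\vec N$); in the latter case $a_2 = \pm 1$ gives $\vec q_n = \pm\vec B$, i.e. $F_{(\alpha, q_n)} = F_{(\alpha, B)}$, as remarked after Definition \ref{normalruledsurfacedefinition}. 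The converse is immediate: on the binormal surface $\vec\alpha'' = \kappa\vec N \parallel \mp\vec N = \vec U_\alpha$ for every $\kappa$, and a straight line is a geodesic on any surface containing it. For part (ii) I would instead compute the normal curvature $\langle\vec\alpha'', \vec U_\alpha\rangle = \langle\kappa\vec N, -a_2\vec N + a_1\vec B\rangle = -\kappa a_2$, using orthonormality of the Frenet frame. Setting this to zero gives $\kappa = 0$ (a straight line) or $a_2 = 0$; in the latter case $a_1 = \pm 1$ yields $\vec q_n = \pm\vec N$, i.e. $F_{(\alpha, q_n)} = F_{(\alpha, N)}$, and the converse again follows since both alternatives make $-\kappa a_2$ vanish.

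The calculations are short, so the only real point of care is keeping the two geometric conditions distinct: the geodesic test demands that $\vec\alpha''$ be parallel to $\vec U_\alpha$, which kills the $\vec B$-component and forces $a_1 = 0$, whereas the asymptotic test demands orthogonality, whose inner product vanishes precisely when $a_2 = 0$. This parallel-versus-orthogonal dichotomy is exactly what singles out the binormal surface in (i) but the principal normal surface in (ii), and in each case one must separately absorb the degenerate straight-line alternative $\kappa = 0$ into the equivalence.
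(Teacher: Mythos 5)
Your proposal is correct and follows essentially the same route as the paper: both reduce the claims to $\vec{\alpha}'' = \kappa\vec{N}$ tested against the restricted normal $\vec{U}_\alpha = -a_2\vec{N} + a_1\vec{B}$, yielding $a_1\kappa = 0$ for the geodesic condition and $a_2\kappa = 0$ for the asymptotic one. The only cosmetic difference is that the paper encodes parallelism via the vanishing of the cross product $\vec{U}_\alpha \times \vec{\alpha}'' = -a_1\kappa\vec{T}$, while you argue directly on the $\vec{B}$-component; the content is identical.
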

	\begin{proof}
		For the curve $\alpha$ to be a geodesic on $F_{(\alpha, q_n)}$, the directions of principal normal $N$ of $\alpha$ and the surface normal $\vec{U}_\alpha$ along $\alpha$ must be identical. Then, $\alpha$ is a geodesic if and only if $\vec{U}_\alpha \times \vec{\alpha}'' = 0$. Using this equality, we get
		\begin{equation*}
			\vec{U}_\alpha \times \vec{\alpha}'' = \left( -a_2 \vec{N} + a_1 \vec{B} \right) \times \kappa \vec{N} = -a_1 \kappa \vec{T}.
		\end{equation*}
		which gives us (i).

		Similarly, for the base curve $\alpha$ to be an asymptotic curve on $F_{(\alpha, q_n)}$, the principal normal $N$ of $\alpha$ and surface normal $\vec{U}_\alpha$ along $\alpha$ must be perpendicular. Then, $\alpha$ is an asymptotic curve if and only if $\langle \vec{U}_\alpha, \vec{\alpha}'' \rangle = 0$. Using this equality, we get
		\begin{equation*}
			\langle \vec{U}_\alpha, \vec{\alpha}'' \rangle = \langle -a_2 \vec{N} + a_1 \vec{B}, \kappa \vec{N} \rangle = - a_2 \kappa
		\end{equation*}
		which gives (ii).
	\end{proof}
	
	\begin{proposition} \label{lineofcurvature}
		The base curve $\alpha$ is a line of curvature on the GNR-surface $F_{(\alpha, q_n)}$ if and only if the system
		\begin{equation*}
			\begin{cases}
				a_2' + a_1 \tau = 0\\
				a_1' - a_2 \tau = 0
			\end{cases}
		\end{equation*}
		satisfies.
	\end{proposition}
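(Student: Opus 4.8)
The plan is to characterize the line-of-curvature condition through Rodrigues' theorem: a curve lying on a surface is a line of curvature if and only if the restriction of the unit surface normal to the curve satisfies the Rodrigues equation, i.e. its derivative along the curve is a scalar multiple of the curve's tangent. Concretely, $\alpha$ (the $u=0$ coordinate line of $F_{(\alpha, q_n)}$) is a line of curvature if and only if $\frac{d}{ds}\vec{U}_\alpha$ is proportional to $\vec{\alpha}' = \vec{T}$, equivalently $\frac{d}{ds}\vec{U}_\alpha \times \vec{T} = 0$. The key preliminary observation is that, because $\alpha$ is traced by keeping $u=0$ while $s$ varies, the total derivative of the surface normal along $\alpha$ coincides with the ordinary derivative of its restriction $\vec{U}_\alpha = -a_2 \vec{N} + a_1 \vec{B}$ obtained earlier; hence no $u$-derivative of $\vec{U}$ enters and the whole argument reduces to differentiating $\vec{U}_\alpha$.

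Next I would differentiate $\vec{U}_\alpha = -a_2 \vec{N} + a_1 \vec{B}$ with respect to $s$ and substitute the Frenet equations $\vec{N}' = -\kappa \vec{T} + \tau \vec{B}$ and $\vec{B}' = -\tau \vec{N}$. Collecting terms with respect to the orthonormal frame $\{ \vec{T}, \vec{N}, \vec{B} \}$ yields
\begin{equation*}
	\vec{U}_\alpha' = a_2 \kappa \vec{T} - (a_2' + a_1 \tau) \vec{N} + (a_1' - a_2 \tau) \vec{B}.
\end{equation*}

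Finally, since $\{ \vec{T}, \vec{N}, \vec{B} \}$ is orthonormal, $\vec{U}_\alpha'$ is parallel to $\vec{T}$ exactly when both its $\vec{N}$- and $\vec{B}$-components vanish, that is, when $a_2' + a_1 \tau = 0$ and $a_1' - a_2 \tau = 0$ hold simultaneously. This is precisely the asserted system, which establishes the equivalence. The only point requiring genuine care is the first one: correctly invoking the Rodrigues characterization and recognizing that it is the derivative of the restricted normal $\vec{U}_\alpha$ (not of the full $\vec{U}(s,u)$) that governs the condition. Once this is in place, the differentiation and the component-wise comparison are routine.
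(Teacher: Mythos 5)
Your proposal is correct and takes essentially the same approach as the paper: both invoke the Rodrigues characterization that $\alpha$ is a line of curvature precisely when the derivative of the restricted normal $\vec{U}_\alpha = -a_2 \vec{N} + a_1 \vec{B}$ is parallel to $\vec{T}$, compute $\vec{U}_\alpha' = a_2 \kappa \vec{T} - (a_2' + a_1 \tau)\vec{N} + (a_1' - a_2 \tau)\vec{B}$ via the Frenet equations, and conclude from the vanishing of the $\vec{N}$- and $\vec{B}$-components. The only cosmetic difference is that the paper expresses the parallelism as $\vec{\alpha}' \times \vec{U}_\alpha' = 0$ and expands the cross product, whereas you read off the components directly.
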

	\begin{proof}
		For the curve $\alpha$ to be a line of curvature on $F_{(\alpha, q_n)}$, the tangent vector $\vec{T}$ of the curve and the derivative of the surface normal along $\alpha$ must be in the same direction, i.e., $\vec{\alpha}' \times \vec{U}_\alpha' = 0$. Then, we get
		\begin{equation*}
			\begin{split}
				\vec{\alpha}' \times \vec{U}_\alpha' & = \vec{T} \times \left( a_2 \kappa \vec{T} - (a_2' + a_1 \tau) \vec{N} + (a_1' - a_2 \tau) \vec{B} \right)\\
				& = - (a_1' - a_2 \tau)\vec{N} - (a_2' + a_1 \tau)\vec{B}
			\end{split}
		\end{equation*}
		Since the vectors $\vec{N}$ and $\vec{B}$ are linearly independent, we obtain that $\vec{\alpha}' \times \vec{U}_\alpha' = 0$ if and only if $a_2' + a_1 \tau = 0$ and $a_1' - a_2 \tau = 0$.
	\end{proof}
	
	Let now consider the Gaussian curvature $K$ and the mean curvature $H$ of GNR-surface $F_{(\alpha, q_n)}$. The fundamental coefficients of GNR-surface $F_{(\alpha, q_n)}$ are calculated from (\ref{EFGformulas}) and (\ref{LMNformulas}) as
	\begin{equation} \label{EFG}
		\begin{split}
			E & = g^2 + u^2 \left[ (a_1' - a_2 \tau)^2 + (a_1 \tau + a_2')^2 \right]\\
			F & = 0\\
			G & = 1
		\end{split}
	\end{equation}
	and
	\begin{equation} \label{LMN}
		\begin{split}
			L & = \frac{1}{\sqrt{u^2 f^2 + g^2}} \left[ -uf\left( g' -u \kappa (a_1'-a_2 \tau) \right) \right.\\
			& \hspace{70pt} \left. + g \left( -a_2 \kappa g + a_2 \tau (a_2'+a_1 \tau) + u \left( -f' + a_1 \tau (a_1'-a_2 \tau) \right)  \right) \right] \\
			M & = \frac{-f}{\sqrt{u^2 f^2 + g^2}}\\
			N & = 0
		\end{split}
	\end{equation}
respectively. Then, from (\ref{KandHformulas}) the Gaussian curvature and mean curvature of GNR-surface $F_{(\alpha, q_n)}$ are
	\begin{equation} \label{KandH}
		\begin{split}
			K & = - \frac{M^2}{E} = - \frac{f^2}{\left( u^2 f^2 + g^2 \right) \left[ g^2 + u^2 \left[ (a_1'-a_2 \tau)^2 + (a_1 \tau + a_2')^2 \right] \right] }\\
			H & = \frac{L}{2E} = \left[ -uf\left( g' -u \kappa (a_1'-a_2 \tau) \right) \right.\\
			& \hspace{45pt} \left. + g \left( -a_2 \kappa g + a_2 \tau (a_1 \tau + a_2') + u \left( -f' + a_1 \tau (a_1'-a_2 \tau) \right)  \right) \right]\\
			& \hspace{45pt} / \left\lbrace 2 \sqrt{u^2 f^2 + g^2} \left[ g^2 + u^2 \left[ (a_1'-a_2 \tau)^2 + (a_1 \tau + a_2')^2 \right] \right] \right\rbrace
		\end{split}
	\end{equation}
	respectively. As we see from (\ref{KandH}) and Proposition \ref{developableproposition}, a classical characterization for developable ruled surfaces stated as "a ruled surface is developable if and only if the Gaussian curvature vanishes" holds for GNR-surfaces.
	
	From (\ref{KandH}), we have the following corollary:

	\begin{corollary}
		Between the Gaussian curvature $K$ and the mean curvatıre $H$ of a GNR-surface $F_{(\alpha, q_n)}$ the following relation exists 
		\begin{equation*}
			KL + 2HM^2 = 0
		\end{equation*}
		or substituting $L$ and $M$ from (\ref{LMN}),
		\begin{equation*}
			\begin{split}
				\frac{K}{\sqrt{u^2 f^2 + g^2}} & \left[ -uf\left( g' -u \kappa (a_1'-a_2 \tau) \right) \right.\\
				& \hspace{10pt} \left. + g \left( -a_2 \kappa g + a_2 \tau (a_1 \tau + a_2') + u \left( -f' + a_1 \tau (a_1'-a_2 \tau) \right)  \right) \right] - \frac{2 H f^2}{u^2 f^2 + g^2} = 0.\\
			\end{split}
		\end{equation*}
	\end{corollary}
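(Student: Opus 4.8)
The plan is to treat this corollary as a purely algebraic consequence of the special form of the fundamental coefficients already recorded in (\ref{EFG}) and (\ref{LMN}); no new geometric input is needed, so I expect the argument to be a one-line cancellation rather than a computation.

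First I would recall from (\ref{EFG}) and (\ref{LMN}) that for every GNR-surface the coefficients satisfy $F=0$, $G=1$ and $N=0$. Substituting these into the general formulas (\ref{KandHformulas}) collapses them to the compact expressions already displayed in (\ref{KandH}), namely $K=-M^2/E$ and $H=L/(2E)$. The key structural observation is that both curvatures now carry the \emph{same} denominator $E$, with numerators $-M^2$ and $L$ respectively; this is exactly the feature that forces the cancellation.

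Then I would simply insert these two expressions into the left-hand side $KL+2HM^2$:
\[
KL+2HM^2=\left(-\frac{M^2}{E}\right)L+2\left(\frac{L}{2E}\right)M^2=-\frac{LM^2}{E}+\frac{LM^2}{E}=0,
\]
so the two terms cancel identically and the coordinate-free form of the identity is proved. To recover the explicit second form, I would then substitute the closed expressions from (\ref{LMN}): from $M=-f/\sqrt{u^2f^2+g^2}$ one gets $M^2=f^2/(u^2f^2+g^2)$, hence $2HM^2=2Hf^2/(u^2f^2+g^2)$, while $KL=\frac{K}{\sqrt{u^2f^2+g^2}}\bigl[\,\cdots\,\bigr]$ with $[\cdots]$ the bracket appearing in the formula for $L$. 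Replacing $KL$ and $2HM^2$ by these expressions reproduces the displayed equation term for term.

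There is essentially no obstacle here, since the heavy lifting, computing $E,F,G,L,M,N$ and thence $K$ and $H$, was done in deriving (\ref{EFG})–(\ref{KandH}). The only point demanding care is the sign bookkeeping: because $K=-M^2/E$ is negative, it is precisely the $+2HM^2$ term (with $H=L/(2E)$) that is needed to annihilate $KL$. In particular, consistency requires that in the fully written-out form the $2HM^2=2Hf^2/(u^2f^2+g^2)$ contribution be added with the sign dictated by $K=-M^2/E$, which is the only thing to verify when transcribing the explicit equation.
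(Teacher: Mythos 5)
Your proposal is correct and is exactly the paper's (implicit) argument: the corollary is presented as an immediate consequence of (\ref{KandH}), i.e.\ of $K=-M^2/E$ and $H=L/(2E)$, which is precisely your one-line cancellation $KL+2HM^2=-LM^2/E+LM^2/E=0$. One remark: your sign bookkeeping actually exposes a typo in the paper's second display, since substituting $M^2=f^2/(u^2f^2+g^2)$ into $KL+2HM^2=0$ produces the term $+\,2Hf^2/(u^2f^2+g^2)$, not the minus sign printed there.
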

	
	\begin{proposition} \label{minimalconditionproposition}
		The GNR-surface $F_{(\alpha, q_n)}$ is minimal if and only if the equality
		\begin{equation} \label{minimalcondition}
			\frac{f}{g} = \frac{-a_2 \kappa g + a_2 \tau (a_2'+a_1 \tau) + u \left( -f' + a_1 \tau (a_1'-a_2 \tau) \right)}{u \left( g' -u \kappa (a_1'-a_2 \tau) \right)}
		\end{equation}
		satisfies.
	\end{proposition}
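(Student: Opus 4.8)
The plan is to read off minimality directly from the mean curvature expression already computed in (\ref{KandH}) and then rearrange the condition $H=0$ into ratio form. The surface is minimal exactly when $H$ vanishes identically. Because the coefficients in (\ref{EFG}) give $F=0$, $G=1$ and the coefficient $N=0$ in (\ref{LMN}), the general formula (\ref{KandHformulas}) reduces to $H=\frac{GL}{2(EG-F^2)}=\frac{L}{2E}$, as recorded in (\ref{KandH}). First I would note that $E=g^2+u^2\bigl[(a_1'-a_2\tau)^2+(a_1\tau+a_2')^2\bigr]$ is a sum of squares and hence strictly positive at every regular point, where by Theorem \ref{regulartheorem} not both $f$ and $g$ vanish. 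Consequently $H=0$ is equivalent to $L=0$.

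Next I would use the explicit form of $L$ in (\ref{LMN}). Since its scalar prefactor $1/\sqrt{u^2f^2+g^2}$ is nonzero at regular points, $L=0$ holds if and only if its bracketed factor vanishes, i.e.
\begin{equation*}
    uf\bigl(g'-u\kappa(a_1'-a_2\tau)\bigr) = g\bigl(-a_2\kappa g + a_2\tau(a_2'+a_1\tau) + u(-f'+a_1\tau(a_1'-a_2\tau))\bigr).
\end{equation*}
Dividing both sides by $g\cdot u\bigl(g'-u\kappa(a_1'-a_2\tau)\bigr)$ then isolates $f/g$ on the left and reproduces exactly the right-hand side of (\ref{minimalcondition}). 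Since every manipulation is reversible by cross-multiplication, the equivalence between minimality and (\ref{minimalcondition}) follows in both directions.

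The only point demanding care is this final division: the displayed identity (\ref{minimalcondition}) implicitly presumes $g\neq 0$ and $u\bigl(g'-u\kappa(a_1'-a_2\tau)\bigr)\neq 0$. I would therefore understand (\ref{minimalcondition}) as the ratio-form encoding of the polynomial identity $L=0$ displayed above, which is the statement that actually carries the content. Beyond this bookkeeping there is no real obstacle, since the substantive work—computing $E$, $L$, and hence $H$—was already carried out in obtaining (\ref{EFG}), (\ref{LMN}) and (\ref{KandH}); the proposition merely repackages $L=0$ as an equation in $f/g$.
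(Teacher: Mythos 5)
Your proposal is correct and follows essentially the same route as the paper, whose entire proof is the one-line remark that the result is clear from the second equality in (\ref{KandH}), i.e., from $H = L/(2E)$ with $E>0$ and the computed bracket for $L$. You simply carry out the rearrangement of $L=0$ into the ratio form (\ref{minimalcondition}) explicitly, adding the (correct, and worthwhile) caveat about the implicit nonvanishing of $g$ and of $u\bigl(g'-u\kappa(a_1'-a_2\tau)\bigr)$ that the paper leaves unstated.
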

	\begin{proof}
		The proof is clear from the second equality in (\ref{KandH}). 
	\end{proof}
	
	From Catalan theorem, Theorem \ref{helicoidminimaltheorem},  and Proposition \ref{minimalconditionproposition} we obtain the following corollaries:
	
	\begin{corollary}
		The GNR-surface $F_{(\alpha, q_n)}$ is a plane, a helicoid or its fragment if and only if (\ref{minimalcondition}) holds.
	\end{corollary}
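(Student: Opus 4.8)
The plan is to chain together the two results that immediately precede this corollary, since the statement is essentially a translation of the analytic minimality condition (\ref{minimalcondition}) into geometric language. First I would record that, by Definition \ref{normalruledsurfacedefinition}, every GNR-surface $F_{(\alpha, q_n)}$ is by construction a ruled surface, so any general structural theorem about ruled surfaces applies verbatim to it. In particular, Catalan's theorem (Theorem \ref{helicoidminimaltheorem}) is available.

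Next I would spell out precisely which ruled surfaces are minimal. Catalan's theorem asserts that among ruled surfaces \emph{other than planes}, the only minimal ones are the helicoid and its fragments. Since a plane has identically vanishing second fundamental form and hence $H \equiv 0$, a plane is itself minimal. Therefore the complete list of minimal ruled surfaces is exactly: planes, helicoids, and fragments of helicoids. This is the one step that requires a little care, because Catalan's theorem is stated with planes explicitly excluded; I would make the inclusion of the plane case explicit so that both directions of the desired biconditional remain symmetric.

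Finally I would invoke Proposition \ref{minimalconditionproposition}, which characterizes the minimality of $F_{(\alpha, q_n)}$ by the single equation (\ref{minimalcondition}). Combining the two equivalences, namely "$F_{(\alpha, q_n)}$ is minimal $\iff$ (\ref{minimalcondition}) holds'' from Proposition \ref{minimalconditionproposition}, together with "$F_{(\alpha, q_n)}$ is minimal $\iff$ it is a plane, a helicoid or a fragment thereof'' from the refined Catalan statement, yields the claimed equivalence directly by transitivity. No genuinely new computation is needed; the only obstacle, which is purely a matter of bookkeeping, is ensuring that the plane is consistently counted as a minimal ruled surface on both sides of the equivalence so that the list "plane, helicoid, or fragment'' matches exactly the minimality locus cut out by (\ref{minimalcondition}).
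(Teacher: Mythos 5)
Your proposal is correct and follows exactly the paper's route: the paper derives this corollary precisely by combining Catalan's theorem (Theorem \ref{helicoidminimaltheorem}) with Proposition \ref{minimalconditionproposition}, giving no further computation. Your explicit bookkeeping of the plane case (which Catalan's theorem excludes but which is trivially minimal) is a careful touch the paper leaves implicit, but it does not change the argument.
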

	
	\begin{corollary}
		Let the base curve $\alpha$ of the normal surface $F_{(\alpha, q_n)}$ be a straight line. Then, $F_{(\alpha, q_n)}$ is a minimal surface if and only if $f$ is constant.
	\end{corollary}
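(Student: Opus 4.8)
The plan is to specialize the mean–curvature formula to the case $\kappa=0$ and to read off directly when $H$ vanishes identically. First I would record what the hypothesis "$\alpha$ is a straight line" imposes on the data entering the characterization functions. A straight line has vanishing curvature, $\kappa=0$; moreover, following the adapted–frame convention already used in the discussion preceding Corollary~\ref{cylindricalcorollary}, the vectors $\vec N$ and $\vec B$ may be taken to be constant unit vectors orthogonal to the (constant) tangent $\vec T$, and the Frenet relation $\vec N'=-\kappa\vec T+\tau\vec B=0$ then forces $\tau=0$ as well. Hence on such a base curve $g=1-ua_1\kappa=1$ (so $g'=0$), while $f=a_1'a_2-a_1a_2'-\tau=a_1'a_2-a_1a_2'$.

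Next I would substitute $\kappa=\tau=0$, $g=1$, $g'=0$ directly into the expression for $H$ in (\ref{KandH}), rather than into the rearranged ratio of Proposition~\ref{minimalconditionproposition}, whose denominator $u\big(g'-u\kappa(a_1'-a_2\tau)\big)$ degenerates to $0$ precisely in this situation. Since $H=L/(2E)$ with $E=g^2+u^2[(a_1'-a_2\tau)^2+(a_1\tau+a_2')^2]=1+u^2[(a_1')^2+(a_2')^2]>0$, and the factor $1/\sqrt{u^2f^2+g^2}$ appearing in $L$ is nonzero at every regular point, minimality ($H\equiv0$) is equivalent to the vanishing of the bracket defining $L$.

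With these substitutions the bracket collapses: the summand $-uf\big(g'-u\kappa(a_1'-a_2\tau)\big)$ vanishes because $g'=0$ and $\kappa=0$, and in the remaining summand $g\big(-a_2\kappa g+a_2\tau(a_2'+a_1\tau)+u(-f'+a_1\tau(a_1'-a_2\tau))\big)$ every term carrying a factor of $\kappa$ or $\tau$ drops out, leaving only $-uf'$. Thus $L=-uf'/\sqrt{u^2f^2+1}$ and $H=-uf'/(2E\sqrt{u^2f^2+1})$.

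Finally I would conclude: $H\equiv0$ on the surface if and only if $-uf'=0$ for all admissible $(s,u)$, which (choosing $u\neq0$) holds if and only if $f'(s)=0$ for every $s$, that is, if and only if $f$ is constant; the converse direction is immediate since $f'=0$ makes the bracket, hence $L$ and $H$, vanish. The only genuinely delicate point is the first step --- justifying that the straight–line hypothesis legitimately sets \emph{both} $\kappa$ and $\tau$ to zero through the chosen constant adapted frame; once that is granted, the statement is a short specialization of the curvature formulas already derived in (\ref{KandH}).
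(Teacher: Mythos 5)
Your proposal is correct and follows essentially the route the paper intends: the corollary is stated as a direct specialization of the mean curvature formula (\ref{KandH}) / Proposition \ref{minimalconditionproposition} to a straight-line base curve, where $\kappa=\tau=0$ and $g\equiv 1$ reduce $H$ to $-uf'/\bigl(2E\sqrt{u^2f^2+1}\bigr)$, so $H\equiv 0$ iff $f'=0$. Your observation that one should work with $H=L/(2E)$ rather than the rearranged ratio (\ref{minimalcondition}) — whose denominator vanishes identically in this case — is a legitimate refinement of the same argument (equivalently, one cross-multiplies the ratio), not a different approach.
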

%	\begin{proof}
%		Let $\alpha$ be a straight line. Then, we have $\kappa = \tau 0$. By substituting $\kappa$ and $\tau$ in $L$, we obtain $L=\frac{-f'ug}{\sqrt{u^2 f^2 + g^2}}$.
		
%		Now let $F_{(\alpha, q_n)}$ be minimal. Thus, we get $H=0$ and therefore $L=0$. Since $L=\frac{-f'ug}{\sqrt{u^2 f^2 + g^2}}$, it follows $f'=0$, i.e., $f$ is constant.
		
%		Conversely, let $f$ be constant. By integrating $f$ we have $f'=0$ and so $L=H=0$. Thus, F_{(\alpha, q_n)}$ is a minimal surface.
%	\end{proof}
	
	Now, let $F_{(\alpha, q_n)}$ be developable. Then, $f=0$ and we have $a_1'a_2 - a_1 a_2 ' = \tau$. Since $q_n$ is unit, $a_1 a_1' + a_2 a_2' = 0$. Now, it follows,
	\begin{equation*}
		a_1 \tau + a_2' = 0
		\hspace{8pt}
		\textnormal{and}
		\hspace{6pt}
		a_1' - a_2 \tau = 0.
	\end{equation*}
	
	Now, from (\ref{minimalcondition}) we have that $a_2 \kappa g^2 =0$. From the last equations and Proposition \ref{lineofcurvature}, the followings are obtained:
	\begin{corollary}
		The base curve $\alpha$ is always a line of curvature on the developable GNR-surface $F_{(\alpha, q_n)}$.
	\end{corollary}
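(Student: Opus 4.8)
The plan is to show that on a developable GNR-surface the base curve $\alpha$ automatically satisfies the system from Proposition~\ref{lineofcurvature}, namely $a_2' + a_1\tau = 0$ and $a_1' - a_2\tau = 0$. I would start from the defining hypothesis of developability, which by Proposition~\ref{developableproposition} is exactly $f = 0$, i.e. $a_1' a_2 - a_1 a_2' - \tau = 0$, equivalently $a_1' a_2 - a_1 a_2' = \tau$.

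The key observation is that the constraint $a_1^2 + a_2^2 = 1$ gives a second relation upon differentiation: $a_1 a_1' + a_2 a_2' = 0$. So I now have two linear equations in the two unknowns $a_1'$ and $a_2'$, with coefficient matrix built from $a_1, a_2$. The first reads $a_2 a_1' - a_1 a_2' = \tau$ and the second reads $a_1 a_1' + a_2 a_2' = 0$. Solving this $2\times 2$ system (the determinant is $a_2^2 + a_1^2 = 1 \neq 0$, so it is uniquely solvable) yields $a_1' = a_2 \tau$ and $a_2' = -a_1\tau$. These are precisely the two equations $a_1' - a_2\tau = 0$ and $a_2' + a_1\tau = 0$ appearing in the statement of Proposition~\ref{lineofcurvature}.

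Having produced exactly the system characterizing when $\alpha$ is a line of curvature, I would invoke Proposition~\ref{lineofcurvature} directly to conclude that $\alpha$ is a line of curvature on $F_{(\alpha, q_n)}$. Since developability was the only assumption, this holds for every developable GNR-surface, which is the claim.

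I do not expect a genuine obstacle here; the argument is a short linear-algebra deduction from the unit-norm constraint combined with the developability condition. The only point requiring a little care is to verify that the $2\times 2$ system is nondegenerate, but this is immediate since its determinant equals $a_1^2 + a_2^2 = 1$; no case distinction on whether $a_1$ or $a_2$ vanishes is needed. One could also arrive at the same conclusion by reading off the consequences of $f=0$ already recorded in the paragraph preceding the corollary, where the two relations $a_1\tau + a_2' = 0$ and $a_1' - a_2\tau = 0$ are derived, and then citing Proposition~\ref{lineofcurvature}; I would present the self-contained derivation for clarity.
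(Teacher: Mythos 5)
Your proof is correct and follows essentially the same route as the paper: combining the developability condition $f=0$ with the differentiated unit-norm constraint $a_1a_1' + a_2a_2' = 0$ to obtain $a_1' - a_2\tau = 0$ and $a_2' + a_1\tau = 0$, then invoking Proposition~\ref{lineofcurvature}. The paper states this deduction tersely in the paragraph preceding the corollary; your explicit $2\times 2$ linear-system argument merely fills in the elementary step the paper leaves implicit.
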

	
	\begin{corollary}
		A developable regular GNR-surface $F_{(\alpha, q_n)}$ is minimal (or equivalently a helicoid or a plane) if and only if $\alpha$ is a straight line or $F_{(\alpha, q_n)}=F_{(\alpha, N)}$.
	\end{corollary}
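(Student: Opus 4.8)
The plan is to specialize the mean-curvature formula (\ref{KandH}) to the developable case and read off exactly when it vanishes. Since the surface is developable, Proposition \ref{developableproposition} gives $f = 0$, and therefore also $f' = 0$. As recorded immediately before the statement, combining $f = 0$ with the identity $a_1 a_1' + a_2 a_2' = 0$ (obtained by differentiating $a_1^2 + a_2^2 = 1$) and solving the resulting $2 \times 2$ linear system in $a_1', a_2'$ — whose determinant is $a_1^2 + a_2^2 = 1 \neq 0$ — yields the two relations $a_1' - a_2\tau = 0$ and $a_2' + a_1\tau = 0$. These are precisely the quantities that occur throughout the numerator of $H$, which is what makes the reduction clean.

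First I would use regularity to pin down $g$. By Theorem \ref{regulartheorem} the surface fails to be regular exactly where $f = g = 0$; since $f \equiv 0$ on a developable surface, regularity forces $g \neq 0$ at every point. In particular $E = g^2 + u^2[(a_1'-a_2\tau)^2 + (a_1\tau + a_2')^2] \geq g^2 > 0$, so from $H = L/(2E)$ in (\ref{KandH}) minimality is equivalent to $L = 0$, i.e.\ to the vanishing of the $H$-numerator. Substituting $f = 0$, $f' = 0$, $a_1' - a_2\tau = 0$ and $a_2' + a_1\tau = 0$ into (\ref{LMN}) (equivalently into condition (\ref{minimalcondition})), every term of that numerator carries a factor of $f$, of $(a_1' - a_2\tau)$, or of $(a_2' + a_1\tau)$ except the single term $-a_2\kappa g^2$; hence the numerator collapses to $-a_2\kappa g^2$, exactly the identity $a_2\kappa g^2 = 0$ already noted above.

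It then remains to interpret $a_2\kappa g^2 = 0$. Since $g \neq 0$ by regularity, minimality (holding for all $u$) is equivalent to $a_2\kappa = 0$. Splitting cases: $\kappa = 0$ means $\alpha$ is a straight line, while if $\alpha$ is a genuine Frenet curve ($\kappa \neq 0$) then $a_2 = 0$, which forces $a_1 = \pm 1$ by $a_1^2 + a_2^2 = 1$, so that $\vec{q}_n = \pm\vec{N}$ and $F_{(\alpha, q_n)} = F_{(\alpha, N)}$. This gives the stated equivalence, and the parenthetical ``helicoid or plane'' follows at once from Catalan's theorem, Theorem \ref{helicoidminimaltheorem}, since a minimal ruled surface other than a plane is a helicoid.

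I expect the only delicate point to be the bookkeeping in the second step: one must be sure that setting $f \equiv 0$ genuinely kills $f'$ as well, and that the three vanishing relations annihilate \emph{every} summand of the long numerator of $H$ apart from $-a_2\kappa g^2$, rather than leaving a hidden $u$-dependent remainder. Once that cancellation is verified and the regularity input $g \neq 0$ is invoked, the case analysis and the appeal to Catalan's theorem are routine.
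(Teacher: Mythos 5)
Your proposal is correct and follows essentially the same route as the paper: the text preceding the corollary derives exactly the relations $a_1'-a_2\tau=0$ and $a_2'+a_1\tau=0$ from $f=0$ together with $a_1a_1'+a_2a_2'=0$, reduces the minimality condition (\ref{minimalcondition}) to $a_2\kappa g^2=0$, and the corollary then follows from regularity ($g\neq 0$), the case split $\kappa=0$ versus $a_2=0$, and Catalan's theorem. Your verification that the numerator of $H$ collapses to $-a_2\kappa g^2$ is the same computation, just spelled out more explicitly than in the paper.
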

	
	Since we assume $F_{(\alpha, q_n)}$ is developable, the partial derivatives given by (\ref{nonunitsurfacenormal}) become
	\begin{equation} \label{developablepartialderivatives}
		\begin{split}
			\frac{\partial \vec{F}_{(\alpha, q_n)}}{\partial s} & = g \vec{T}\\
			\frac{\partial \vec{F}_{(\alpha, q_n)}}{\partial u} & = a_1 \vec{N} + a_2 \vec{B}
		\end{split}
	\end{equation}
	and the unit surface normal $U$ becomes $\vec{U} = - a_2 \vec{N} + a_1 \vec{B}$. By considering the base $\left\lbrace  \frac{\partial \vec{F}_{(\alpha, q_n)}}{\partial s}, \frac{\partial \vec{F}_{(\alpha, q_n)}}{\partial u} \right\rbrace$ of the tangent space $T_p F_{(\alpha, q_n)}$ at a point $p \in F_{(\alpha, q_n)}$, for a vector $v_p \in T_p F_{(\alpha, q_n)}$, the Weingarten map of $F_{(\alpha, q_n)}$ is given by $S_p = - D_p v: T_p F_{(\alpha, q_n)} \rightarrow T_{v_p} S^2$. Then, we have
	\begin{equation*}
		\begin{split}
			S_p \left( \frac{\partial \vec{F}_{(\alpha, q_n)}}{\partial s} \right) = & D_{\frac{\partial F_{(\alpha, q_n)}}{\partial s}} \vec{U}\\
			= & -\frac{\partial \vec{U}}{\partial s}\\
			= & -\frac{\partial }{\partial s} \left( - a_2 \vec{N} + a_1 \vec{B} \right)\\
			= & -a_2 \kappa \vec{T}\\
			= & -\frac{a_2 \kappa}{g} \frac{\partial \vec{F}_{(\alpha, q_n)}}{\partial s}\\
			S_p \left( \frac{\partial \vec{F}_{(\alpha, q_n)}}{\partial s} \right) = & D_{\frac{\partial F_{(\alpha, q_n)}}{\partial u}} \vec{U}\\
			= & -\frac{\partial \vec{U}}{\partial u}\\
			= & 0
		\end{split}
	\end{equation*}
	Thus, the matrix form of the Weingarten map is given as
	\begin{equation} \label{weingartenmapmatrix}
		S_p
		=
		\begin{bmatrix}
			-\frac{a_2 \kappa}{g} & 0 \\
			0 & 0
		\end{bmatrix}
	\end{equation}
	From (\ref{weingartenmapmatrix}), it is easy to see that for a developable GNR-surface $F_{(\alpha, q_n)}$, the Gaussian curvature and mean curvature are
	\begin{equation*}
		K = det(S_p)=0
		\hspace{8pt}
		\textnormal{and}
		\hspace{8pt}
		H = \frac{1}{2}tr(S_p)=-\frac{a_2 \kappa}{2g},
		\hspace{8pt}
	\end{equation*}
	respectively.
	From the characteristic equation $det(S_p-\lambda I)=0$ of the matrix of Weingarten map, the principal curvatures of the developable GNR-surface $F_{(\alpha, q_n)}$ are $\lambda_1 = -\frac{a_2 \kappa}{g}$, $\lambda_2 = 0$ and we have the following corollary:
	
	\begin{corollary}
		\begin{enumerate} [(i)]
			\item If $\kappa \ne 0$ and $a_2 \ne 0$, there are no umbilical points on the surface $F_{(\alpha, q_n)}$.
			\item If $\kappa = 0$ or $a_2 = 0$, then we get $\lambda_1=\lambda_2=0$ and all the points of the surface $F_{(\alpha, q_n)}$ are planar and the quadratic approach of the surface is a plane.
			\item If $\kappa \ne 0$ and $a_2 \ne 0$, then $\lambda_1 \lambda_2 = 0$, $\lambda_1 \ne 0$ and all points of the surface $F_{(\alpha, q_n)}$ are parabolic. Therefore, the quadratic approach of the surface is a parabolic cylinder.
		\end{enumerate}
	\end{corollary}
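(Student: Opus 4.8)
The plan is to read everything directly off the principal curvatures $\lambda_1 = -\frac{a_2 \kappa}{g}$ and $\lambda_2 = 0$ obtained from the Weingarten matrix in (\ref{weingartenmapmatrix}), working under the standing regularity assumption $g \neq 0$ that holds for a developable regular GNR-surface. The entire statement is a point-classification exercise, so first I would recall the standard definitions from surface theory: a point is \emph{umbilical} when $\lambda_1 = \lambda_2$, \emph{planar} when $\lambda_1 = \lambda_2 = 0$, and \emph{parabolic} when exactly one principal curvature vanishes, so that $K = \lambda_1 \lambda_2 = 0$ while $H = \tfrac{1}{2}(\lambda_1 + \lambda_2) \neq 0$.

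For part (i), I would observe that since $g \neq 0$, the hypothesis $\kappa \neq 0$ and $a_2 \neq 0$ forces $\lambda_1 = -\frac{a_2 \kappa}{g} \neq 0$, whereas $\lambda_2 = 0$ identically. Hence $\lambda_1 \neq \lambda_2$ at every point and no umbilical point can occur. For part (ii), the hypothesis $\kappa = 0$ or $a_2 = 0$ immediately yields $\lambda_1 = -\frac{a_2 \kappa}{g} = 0 = \lambda_2$, so every point is planar; because both principal curvatures vanish, the second fundamental form vanishes identically and the second-order approximation of the surface over its tangent plane reduces to that plane itself.

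For part (iii), the hypotheses $\kappa \neq 0$ and $a_2 \neq 0$ give $\lambda_1 \neq 0$ together with $\lambda_2 = 0$, so that $\lambda_1 \lambda_2 = 0$ with $\lambda_1 \neq 0$, and every point is parabolic. Since the surface curves in exactly one principal direction and is flat in the orthogonal one, writing the surface locally as a height function $z$ over the tangent plane with coordinates $(x,y)$ along the principal directions gives the quadratic model $z = \tfrac{1}{2}\lambda_1 x^2$, which is a parabolic cylinder.

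The only point requiring care — and it is hardly an obstacle here — is the interpretation of the ``quadratic approach'': it is the graph of the second-order Taylor polynomial of the surface expressed as a height function over the tangent plane, whose Hessian is exactly the shape operator with eigenvalues $\lambda_1, \lambda_2$. Once this is fixed, identifying the resulting quadric as a plane in case (ii) and as a parabolic cylinder in case (iii) is immediate, and the three cases exhaust the possibilities since $\lambda_2 = 0$ always. Thus the corollary follows entirely from the already-computed diagonal form of $S_p$.
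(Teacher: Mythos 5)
Your proposal is correct and follows essentially the same route as the paper: the corollary is read directly off the principal curvatures $\lambda_1 = -\frac{a_2 \kappa}{g}$, $\lambda_2 = 0$ obtained from the diagonal Weingarten matrix of the developable GNR-surface, with the standing regularity assumption $g \neq 0$. Your added remarks on the quadratic approximation as the second-order height-function graph only make explicit what the paper leaves implicit.
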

	
	The principal directions $\vec{e}_1$, $\vec{e}_2$ of developable GNR-surface are defined by $S_p (\vec{e}_1) = \lambda_1 \vec{e}_1$, $S_p (\vec{e}_2) = \lambda_2 \vec{e}_2$ and calculated as
	\begin{equation} \label{principaldirections}
		\begin{split}
			\vec{e}_1 & = g \vec{T}\\
			\vec{e}_2 & =  a_1 \vec{N} + a_2 \vec{B}
		\end{split}
	\end{equation}
	respectively. Since a curve on a surface is a line of curvature if its tangent vector is a principal line, i.e., $S_p (\vec{T}) = m \vec{T}$, $m \in \mathbb{R}$, from the first equality, we have that the base curve $\alpha$ is always a line of curvature on developable GNR-surface $F_{(\alpha, q_n)}$, i.e., we have Corollary 3.13 again. Furthermore, for the parameter curves of the developable normal surface $F_{(\alpha, q_n)}$, we have the following corollary:
	
	\begin{corollary}
		All parameter curves $F_{(\alpha, q_n)} (s, u_0)$ and $F_{(\alpha, q_n)} (s_0, u)$ of a developable GNR-surface $F_{(\alpha, q_n)}$ are lines of curvatures.
	\end{corollary}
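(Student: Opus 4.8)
The plan is to use the defining criterion for a line of curvature: a curve on a surface is a line of curvature precisely when its tangent direction is a principal direction, i.e.\ an eigenvector of the Weingarten map $S_p$. Since the principal directions $\vec{e}_1 = g \vec{T}$ and $\vec{e}_2 = a_1 \vec{N} + a_2 \vec{B}$ of the developable GNR-surface have already been computed in (\ref{principaldirections}), the entire argument reduces to comparing these with the tangent vectors of the two families of parameter curves.

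First I would identify the parameter-curve tangents. The $s$-parameter curve $F_{(\alpha, q_n)}(s, u_0)$ has tangent $\frac{\partial \vec{F}_{(\alpha, q_n)}}{\partial s}$, while the $u$-parameter curve (a ruling) $F_{(\alpha, q_n)}(s_0, u)$ has tangent $\frac{\partial \vec{F}_{(\alpha, q_n)}}{\partial u}$. For a developable surface these are given by (\ref{developablepartialderivatives}), namely $\frac{\partial \vec{F}_{(\alpha, q_n)}}{\partial s} = g \vec{T}$ and $\frac{\partial \vec{F}_{(\alpha, q_n)}}{\partial u} = a_1 \vec{N} + a_2 \vec{B}$. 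Comparing directly with (\ref{principaldirections}) shows that $\frac{\partial \vec{F}_{(\alpha, q_n)}}{\partial s}$ is parallel to $\vec{e}_1$ and $\frac{\partial \vec{F}_{(\alpha, q_n)}}{\partial u}$ is parallel to $\vec{e}_2$. Equivalently, by (\ref{weingartenmapmatrix}) the matrix of $S_p$ in the basis $\left\lbrace \frac{\partial \vec{F}_{(\alpha, q_n)}}{\partial s}, \frac{\partial \vec{F}_{(\alpha, q_n)}}{\partial u} \right\rbrace$ is diagonal, so these basis vectors are eigenvectors of $S_p$; hence both parameter-curve tangents are principal directions and both families are lines of curvature.

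An alternative route, useful as a consistency check, relies on the classical criterion that the parameter curves of a surface are lines of curvature if and only if $F = M = 0$. Here $F = 0$ holds identically by (\ref{EFG}), and for a developable surface $f = 0$, so $M = -f / \sqrt{u^2 f^2 + g^2} = 0$ by (\ref{LMN}), reproducing the conclusion. The only point requiring a little care---hardly a genuine obstacle---is to confirm that both tangent fields are nonzero so that they define honest directions: regularity of the developable surface forces $g \ne 0$, while $\lVert a_1 \vec{N} + a_2 \vec{B} \rVert = \sqrt{a_1^2 + a_2^2} = 1$ ensures the ruling direction never degenerates.
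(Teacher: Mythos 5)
Your proposal is correct and follows essentially the same route as the paper: the paper obtains this corollary directly from the diagonal Weingarten matrix (\ref{weingartenmapmatrix}) together with the principal directions (\ref{principaldirections}), which coincide with the parameter-curve tangents (\ref{developablepartialderivatives}). Your supplementary $F=M=0$ check and the explicit verification that the tangent fields are nondegenerate (using $g \ne 0$ from regularity) go slightly beyond the paper's brief justification, but they confirm the same argument rather than replace it.
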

	
	Let now $\vec{v}_p$ be a unit tangent vector in the tangent space $T_p F_{(\alpha, q_n)}$ at a point $p$ on developable GNR-surface $F_{(\alpha, q_n)}$. Then, $\vec{v}_p$ can be written as
	\begin{equation} \label{unittangentvector}
		\vec{v}_p = C(s,u) \frac{\partial \vec{F}_{(\alpha, q_n)}}{\partial s} + D(s,u) \frac{\partial \vec{F}_{(\alpha, q_n)}}{\partial u}
	\end{equation}
	where $C,D$ are differentiable functions. Using the linearity of Weingarten map, we have
	\begin{equation*}
		S_p (\vec{v}_p) = C a_2 \kappa \vec{T}
	\end{equation*}
	and substituting (\ref{developablepartialderivatives}) in (\ref{unittangentvector}), we get
	\begin{equation*}
		\vec{v}_p = C g \vec{T} + D \vec{q}_n.
	\end{equation*}
	Since, the normal curvature in the direction of $\vec{v}_p$ is given by $k_n (\vec{v}_p) = \left\langle S_p (\vec{v}_p), \vec{v}_p \right\rangle$, we have
	\begin{equation*}
		k_n (\vec{v}_p) = C^2 a_2 \kappa g
	\end{equation*}
	If $C=0$, $a_2=0$ or $\kappa = 0$, then $k_n (\vec{v}_p) = 0$ and we have the following theorem:
	\begin{theorem}
		\begin{enumerate}[(i)]
			\item If $\kappa \ne 0$, then a unit tangent vector $\vec{v}_p$ on the developable GNR-surface $F_{(\alpha, q_n)}$ is asymptotic if and only if $\vec{v}_p$ is a ruling i.e,  $\vec{v}_p = \vec{q}_n$ or $a_2=0$.
			\item If $\alpha$ is a straight line, then any tangent vector $\vec{v}_p$ is asymptotic.
		\end{enumerate}
	\end{theorem}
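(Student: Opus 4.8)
The plan is to rely entirely on the normal-curvature formula $k_n(\vec{v}_p) = C^2 a_2 \kappa g$ derived immediately above, together with the defining property that a tangent direction is asymptotic precisely when its normal curvature vanishes. First I would recall that for a developable GNR-surface one has $f=0$, so by Theorem \ref{regulartheorem} regularity forces $g \ne 0$; I would keep this as a standing assumption so that the factor $g$ may be cancelled later.

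For part (i) I would impose the asymptotic condition $k_n(\vec{v}_p) = C^2 a_2 \kappa g = 0$ and use $\kappa \ne 0$ together with $g \ne 0$ to reduce it to $C^2 a_2 = 0$, that is, $C = 0$ or $a_2 = 0$. In the first case, substituting $C = 0$ into $\vec{v}_p = C g \vec{T} + D \vec{q}_n$ gives $\vec{v}_p = D \vec{q}_n$; since $\vec{v}_p$ and $\vec{q}_n$ are both unit, this forces $D = \pm 1$, i.e. $\vec{v}_p = \pm \vec{q}_n$ is the ruling direction. In the second case, $a_2 = 0$ makes $k_n$ vanish identically, so every unit tangent vector is asymptotic. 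The converse directions are immediate: if $\vec{v}_p$ is (up to sign) the ruling then $C = 0$, and if $a_2 = 0$ then the product is already zero; in either case $k_n(\vec{v}_p) = 0$.

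For part (ii), $\alpha$ being a straight line means $\kappa = 0$ identically, so $k_n(\vec{v}_p) = C^2 a_2 \kappa g = 0$ for every choice of the coefficients $C, D$, and therefore every tangent vector on the surface is asymptotic.

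Since the decisive computation of $k_n$ is already in hand, I do not expect a serious obstacle. The only points demanding minor care are justifying $g \ne 0$ from regularity (so that $g$ may legitimately be cancelled in part (i)), and observing that $C = 0$ combined with $\lVert \vec{v}_p \rVert = 1$ and $\lVert \vec{q}_n \rVert = 1$ yields $\vec{v}_p = \pm \vec{q}_n$ rather than merely a scalar multiple of the ruling.
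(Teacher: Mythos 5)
Your proposal is correct and follows essentially the same route as the paper, which derives the normal curvature formula $k_n(\vec{v}_p) = C^2 a_2 \kappa g$ via the Weingarten map and reads the theorem off as an immediate consequence. If anything, you are slightly more careful than the paper: you explicitly justify cancelling $g$ (regularity of the developable surface forces $g \ne 0$) and pin down $D = \pm 1$ from the unit-length condition, both of which the paper leaves implicit when it only notes the ``if'' direction.
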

	
	%%%%%%%%%%%%%%%%%%%%%%%%%%%%%%%%%%%%%%%%%%%%%%%%%%%%%%%%%%%%%%%%%%%%%%%%%%%%%%%%%%%%%%%%%%%%%%%%%%%%%%%%%%%%%%%%%%%%%%%%%%%%%%%%%%%%%%%%%%%%%%%%%%%%%%%%%%%%%%%%%%%%%%%%%%%%%%%%%%%%%%%%%%%%%%%%%%%%%%%%%%%%%%%%%%%%%%%%%%%%%%%%%%%%%%%%%%%%%%%%%%%%%%%%%%%%%%%%%%%%%%%%%%%%%%%%%%%%%%%%%%%%%%%%%%%%%%%%%%%%%%%%
	
	%buraya k_g ve t_g ile ilgili olan kısım eklenecek.
	
	Let $\varphi$ be a regular curve on the developable GNR-surface $F_{(\alpha, q_n)}$ given by the parametrization $\vec{\varphi} (t) = F_{(\alpha, q_n)} \left( s(t), u(t) \right)$. Since the tangent vector of $\varphi$ lies in the tangent space of $F_{(\alpha, q_n)}$, from (\ref{unittangentvector}) we get
	\begin{equation*}
		\dot{\vec{\varphi}} = \frac{d \vec{\varphi}}{dt} = \frac{\partial \vec{F}_{(\alpha, q_n)}}{\partial s} \frac{ds}{dt} + \frac{\partial \vec{F}_{(\alpha, q_n)}}{\partial u} \frac{du}{dt}
	\end{equation*}
	which yields that
	\begin{equation*}
		\frac{ds}{dt} = \dot{s} = C \left( s(t), u(t) \right)
		\hspace{6pt}
		\textnormal{and}
		\hspace{6pt}
		\frac{du}{dt} = \dot{u} = D \left( s(t), u(t) \right).
	\end{equation*}
	From the derivative of $\vec{v}_p$ with respect to $t$, we obtain
	\begin{equation*}
		\dot{\vec{v}}_p = \left( C^2 g_s + 2CD g_u + \dot{C} g \right) \vec{T} + \left( C^2 g \kappa + \dot{D} a_1 \right) \vec{N} + \dot{D} a_2 \vec{B}
	\end{equation*}
	where $g_s = \frac{\partial g}{\partial s}$ and $g_u = \frac{\partial g}{\partial u}$. Since the unit surface normal along $\varphi$ is $\vec{U}_\varphi = -a_2 \vec{N} + a_1 \vec{B}$, then the geodesic curvature of the surface curve $\varphi$ is given by
	\begin{equation} \label{geodesiccurvature}
		\begin{split}
			\kappa_g & = \langle \vec{v}_p, \dot{\vec{v}}_p \times \vec{U}_\varphi \rangle\\
			& = Cg \left( \dot{D} - C^2 g g_u \right) - D \left( C^2 g_s + 2CD g_u + \dot{C} g \right). 
		\end{split} 
	\end{equation}
	From (\ref{geodesiccurvature}), we have the following corollary:
	\begin{corollary}
		A surface curve $\varphi$ on the developable GNR-surface $F_{(\alpha, q_n)}$ is a geodesic if and only if
		\begin{equation*}
			Cg \left( \dot{D} - C^2 g g_u \right) - D \left( C^2 g_s + 2CD g_u + \dot{C} g \right) = 0.
		\end{equation*} 
	\end{corollary}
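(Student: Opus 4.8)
The plan is to invoke the standard characterization of geodesics: a regular surface curve is a geodesic precisely when its geodesic curvature vanishes identically. Since the geodesic curvature $\kappa_g$ of $\varphi$ has already been computed in equation (\ref{geodesiccurvature}) as the triple product $\langle \vec{v}_p, \dot{\vec{v}}_p \times \vec{U}_\varphi \rangle$, the corollary follows by simply equating that expression to zero.

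First I would recall the setup along $\varphi$: the unit tangent is $\vec{v}_p$, parametrized through $\dot{s} = C$ and $\dot{u} = D$, and the surface normal is $\vec{U}_\varphi = -a_2 \vec{N} + a_1 \vec{B}$. The geodesic curvature measures the tangential part of the acceleration and is given by the signed quantity $\kappa_g = \langle \vec{v}_p, \dot{\vec{v}}_p \times \vec{U}_\varphi \rangle$, which is exactly the object evaluated in (\ref{geodesiccurvature}). Because $\varphi$ is a geodesic if and only if $\kappa_g \equiv 0$ at every point, substituting the closed form from (\ref{geodesiccurvature}) immediately yields the stated condition $Cg \left( \dot{D} - C^2 g g_u \right) - D \left( C^2 g_s + 2CD g_u + \dot{C} g \right) = 0$.

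The only genuine computational content feeding into this — and hence the step I would expect to carry the real work — is the derivation of (\ref{geodesiccurvature}) itself: expressing $\dot{\vec{v}}_p$ in the Frenet frame via $\dot{s} = C$, $\dot{u} = D$ and the developable partial derivatives (\ref{developablepartialderivatives}), then forming the triple product with $\vec{U}_\varphi$ and collecting the tangential components. Since that calculation is already carried out in the preceding display, the corollary reduces to a one-line consequence. I would therefore present it as such, with no additional machinery: the equivalence $\kappa_g \equiv 0 \iff \varphi \text{ is a geodesic}$ together with (\ref{geodesiccurvature}) gives the result directly, and there is no real obstacle beyond correctly reading off the already-derived formula.
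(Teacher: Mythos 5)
Your proposal is correct and matches the paper's own treatment: the paper derives the geodesic curvature formula in equation (\ref{geodesiccurvature}) and then states the corollary as an immediate consequence of the fact that $\varphi$ is a geodesic if and only if $\kappa_g \equiv 0$. You correctly identified that all the computational content lies in the preceding derivation of (\ref{geodesiccurvature}), so the corollary itself is a one-line consequence, exactly as presented in the paper.
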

	The derivative of unit surface normal along $\varphi$ with respect $s$ is given by $\vec{U}'_\varphi = - S_p (\vec{v}_p) = - C a_2 \kappa \vec{T}$ and therefore the geodesic torsion of the surface curve $\varphi$ is given by
	\begin{equation} \label{geodesictorsion}
		\tau_g = \langle \vec{U}'_\varphi, \vec{U}_\varphi \times \vec{v}_p \rangle = CD a_2 \kappa.
	\end{equation}
	From the equation (\ref{geodesictorsion}) we have the following corollary:
	\begin{corollary}
		\begin{enumerate} [(i)]
			\item If $\kappa = 0$, i.e., the base curve $\alpha$ is a straight line, then all surface curves are lines of curvature.
			\item If $F_{(\alpha, q_n)} = F_{(\alpha, N)}$, then all surface curves are lines of curvature.
			\item If $C=0$, $\kappa \ne 0$ and $a_2 \ne 0$, then $\vec{v}_p = \vec{q}_n$ and only rulings are lines of curvature.
			\item  If $D=0$, $\kappa \ne 0$ and $a_2 \ne 0$, then $\vec{v}_p = \vec{T}$ and only the base curve $\alpha$ is a line of curvature.
		\end{enumerate}
	\end{corollary}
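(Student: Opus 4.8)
The plan is to read off every assertion directly from the geodesic-torsion formula (\ref{geodesictorsion}), invoking the standard characterization that a surface curve is a line of curvature if and only if its geodesic torsion vanishes identically. Since (\ref{geodesictorsion}) gives $\tau_g = CD a_2 \kappa$, each of the four cases reduces to checking when this product is zero and then, where needed, identifying the resulting tangent direction from the decomposition (\ref{unittangentvector}).

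First, for (i) I would observe that $\kappa = 0$ forces $\tau_g = CD a_2 \cdot 0 = 0$ regardless of the functions $C$, $D$ and the coefficient $a_2$; hence every surface curve has vanishing geodesic torsion and is a line of curvature. For (ii), I would recall the remark following Definition \ref{normalruledsurfacedefinition} that the equality $F_{(\alpha, q_n)} = F_{(\alpha, N)}$ corresponds to $a_1 = \pm 1$, $a_2 = 0$; substituting $a_2 = 0$ into (\ref{geodesictorsion}) again yields $\tau_g = 0$ for all surface curves, so all of them are lines of curvature.

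For (iii) and (iv), the hypotheses $\kappa \ne 0$ and $a_2 \ne 0$ make $\tau_g = 0$ equivalent to $CD = 0$, i.e. $C = 0$ or $D = 0$. When $C = 0$, substituting into (\ref{unittangentvector}) and using (\ref{developablepartialderivatives}) gives $\vec{v}_p = D(a_1 \vec{N} + a_2 \vec{B}) = D\vec{q}_n$; since $\vec{v}_p$ and $\vec{q}_n$ are unit this forces $D = \pm 1$, so $\vec{v}_p$ points along the ruling, establishing (iii). When $D = 0$, the same substitution gives $\vec{v}_p = Cg\vec{T}$, which points along the tangent of the base curve $\alpha$, establishing (iv).

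Since the whole statement is a direct consequence of (\ref{geodesictorsion}), I expect no serious analytic obstacle. The only point requiring a little care is the bookkeeping in (iii) and (iv): one must verify that $C = 0$ and $D = 0$ are the \emph{only} ways to make $CD a_2 \kappa$ vanish once $\kappa, a_2 \ne 0$, and then read off correctly from (\ref{unittangentvector}) that these two vanishing conditions single out the ruling direction $\vec{q}_n$ and the base-tangent direction $\vec{T}$, respectively — precisely the two principal directions $\vec{e}_2$ and $\vec{e}_1$ already identified in (\ref{principaldirections}).
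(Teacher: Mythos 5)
Your proposal is correct and follows essentially the same route as the paper: the corollary is stated there as an immediate consequence of the geodesic torsion formula $\tau_g = CD a_2 \kappa$ in (\ref{geodesictorsion}), with each case read off by checking when this product vanishes and identifying the tangent direction via (\ref{unittangentvector}) and (\ref{developablepartialderivatives}). Your extra care in cases (iii) and (iv) — noting that $\kappa \ne 0$, $a_2 \ne 0$ force $CD=0$, and that $C=0$ (respectively $D=0$) singles out the ruling direction $\vec{q}_n$ (respectively $\vec{T}$) — simply makes explicit what the paper leaves implicit.
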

	Now, let us consider the Frenet frame of the GNR-surface $F_{(\alpha, q_n)}$. Since the vector $\vec{q}_n (s) = a_1 (s) \vec{N} (s) + a_2 (s) \vec{B} (s)$ is unit, we can take $a_1(s)=\cos (\theta(s))$ and $a_2(s)=\sin (\theta(s))$, i.e., $\vec{q}_n = \cos \theta \vec{N} + \sin \theta \vec{B}$ where $\theta$ is the angle function between $\vec{q}_n$ and $\vec{N}$. Then, the vectors of the Frenet frame $\{\vec{q}_n, \vec{h}, \vec{a}\}$ of the GNR-surface $F_{(\alpha, q_n)}$ are given by
	\begin{equation} \label{qhavectors}
		\begin{split}
			\vec{q}_n & = \cos \theta \vec{N} + \sin \theta \vec{B}\\
			\vec{h} & = \frac{1}{\sqrt{\kappa^2 \cos^2 \theta + z^2}} \left( - \kappa \cos \theta \vec{T} - z \sin \theta \vec{N} + z \cos \theta \vec{B} \right)\\
			\vec{a} & = \frac{1}{\sqrt{\kappa^2 \cos^2 \theta + z^2}} \left( z \vec{T} - \kappa \cos \theta \sin \theta \vec{N} + \kappa \cos^2 \theta \vec{B} \right)
		\end{split}
	\end{equation}
	where $z = \theta' + \tau$. These equalities give the following proposition:
	\begin{proposition}
		For a GNR-surface $F_{(\alpha, q_n)}$, the followings are equivalent.
		\begin{enumerate} [(i)]
			\item The angle function $\theta$ is given with the equality  $\theta = - \int \tau ds$.
			\item The central normal vector $\vec{h}$ of $F_{(\alpha, q_n)}$ coincides with the tangent vector $\vec{T}$ of the curve $\alpha$.
			\item The central tangent vector $\vec{a}$ of $F_{(\alpha, q_n)}$ lies in the normal plane of the curve $\alpha$.
		\end{enumerate}
	\end{proposition}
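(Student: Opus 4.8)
The plan is to reduce all three conditions to the single scalar equation $z=0$, where $z=\theta'+\tau$ is the auxiliary function appearing in (\ref{qhavectors}). First I would observe that condition (i) is just the integrated form of this equation: differentiating $\theta=-\int \tau \, ds$ yields $\theta'=-\tau$, hence $z=\theta'+\tau=0$, and conversely $z=0$ integrates back to (i). Thus the entire proposition reduces to showing that each of (ii) and (iii) holds precisely when $z=0$, and the three-way equivalence follows by chaining through this common scalar condition.

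For (i) $\Leftrightarrow$ (ii), I would simply read off the components of $\vec{h}$ from (\ref{qhavectors}). Its $\vec{N}$- and $\vec{B}$-components are proportional to $-z\sin\theta$ and $z\cos\theta$ respectively, while its $\vec{T}$-component is proportional to $-\kappa\cos\theta$. If $z=0$, both transverse components vanish simultaneously and $\vec{h}$ collapses to a unit multiple of $\vec{T}$, i.e. $\vec{h}=\pm\vec{T}$, so (ii) holds. Conversely, if $\vec{h}$ is parallel to $\vec{T}$, then both transverse components must vanish, giving $z\sin\theta=0$ and $z\cos\theta=0$; since $\sin\theta$ and $\cos\theta$ cannot vanish simultaneously, this forces $z=0$. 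Here I interpret "coincides with" up to sign, which is the standard convention for unit frame vectors, and I assume throughout that $\kappa^2\cos^2\theta+z^2\neq 0$ so that the Frenet frame of the surface is defined.

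For (i) $\Leftrightarrow$ (iii), I would use the expression for $\vec{a}$ in (\ref{qhavectors}): its $\vec{T}$-component is proportional to $z$, whereas its remaining two components lie along $\vec{N}$ and $\vec{B}$. The normal plane of $\alpha$ is precisely $\operatorname{span}\{\vec{N},\vec{B}\}$, so $\vec{a}$ lies in the normal plane exactly when its $\vec{T}$-component vanishes, that is, when $z=0$. Combining this with (i) $\Leftrightarrow z=0$ and (ii) $\Leftrightarrow z=0$ establishes all three equivalences.

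I do not anticipate any genuine obstacle here: once the reduction to $z=0$ is recognized, everything is a direct inspection of the coefficients already recorded in (\ref{qhavectors}), with no further computation required. The only points needing care are the sign convention used in interpreting "coincides" in part (ii), and the standing nondegeneracy assumption that the denominator $\sqrt{\kappa^2\cos^2\theta+z^2}$ is nonzero, i.e. that the surface is non-cylindrical so that $\vec{h}$ and $\vec{a}$ genuinely exist.
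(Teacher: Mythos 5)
Your proof is correct and follows essentially the same route as the paper: the paper's proof simply notes that $\theta = -\int \tau\, ds$ gives $z=0$ and that the equivalences are then clear from (\ref{qhavectors}), which is precisely your reduction to the scalar condition $z=0$ followed by inspection of the components. Your version merely spells out the converse directions (using that $\sin\theta$ and $\cos\theta$ cannot vanish simultaneously) and the sign convention, details the paper leaves implicit.
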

	\begin{proof}
		If $\theta = - \int \tau ds$, then we have $z=0$. Therefore, the proof is clear from (\ref{qhavectors}).
	\end{proof}
	
	\begin{corollary}
		Let $F_{(\alpha, q_n)}$ be a GNR-surface with frame $\{\vec{q}_n, \vec{h}, \vec{a}\}$ and the angle function $\theta$ between $\vec{q}_n$ and $\vec{N}$ be given by $\theta = - \int \tau ds$. Then, the GNR-surface $F_{(\alpha, q_n)}$ is an $h$-slant ruled surface if and only if $\alpha$ is a general helix.
	\end{corollary}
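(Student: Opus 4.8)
The plan is to reduce the $h$-slant condition for the surface directly to the defining property of a general helix, using the hypothesis $\theta = -\int \tau\,ds$ together with the frame formulas in (\ref{qhavectors}). First I would differentiate the hypothesis to record that $\theta' = -\tau$, and hence that $z = \theta' + \tau = 0$. This places us exactly in the situation of the preceding proposition, where the expression for the central normal collapses, so no new computation of the Frenet frame is required.

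Next I would substitute $z = 0$ into the formula for $\vec{h}$ in (\ref{qhavectors}). Both the $\vec{N}$ and $\vec{B}$ components carry a factor of $z$ and therefore vanish, while the normalizing factor reduces to $\sqrt{\kappa^2\cos^2\theta} = |\kappa\cos\theta|$. What survives is $\vec{h} = \frac{-\kappa\cos\theta}{|\kappa\cos\theta|}\,\vec{T} = \mp\,\vec{T}$, so that the central normal of the surface coincides, up to sign, with the unit tangent $\vec{T}$ of the base curve $\alpha$, precisely as already noted in the previous proposition.

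With this identification in hand the equivalence is immediate from the two definitions. By definition the surface $F_{(\alpha, q_n)}$ is $h$-slant exactly when $\vec{h}$ makes a constant angle with some fixed direction $\vec{d}$; since $\vec{h} = \mp\,\vec{T}$, this occurs if and only if $\vec{T}$ makes a constant angle with $\vec{d}$. But a curve is a general helix precisely when its unit tangent makes a constant angle with a fixed direction (Lancret's characterization, equivalently $\tau/\kappa$ constant). Hence the $h$-slant condition on the surface and the general-helix condition on $\alpha$ coincide, which is the claim.

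There is no genuine obstacle here; the only point needing mild care is the nondegeneracy and sign hidden in $|\kappa\cos\theta|$. One must assume $\kappa\cos\theta \ne 0$ so that $\vec{h}$ is well defined — equivalently $\kappa \ne 0$ and $a_1 = \cos\theta \ne 0$, so that $\alpha$ is not a straight line and $F_{(\alpha, q_n)}$ is not the binormal surface. On any interval where $\vec{h}$ is defined the quantity $\kappa\cos\theta$ keeps a constant sign, so the factor $\mp$ is locally constant; and replacing $\vec{T}$ by $-\vec{T}$ replaces the angle $\phi$ with $\pi-\phi$, which is constant exactly when $\phi$ is. Granting this, the proof amounts to the single substitution $z=0$ followed by reading off the two definitions.
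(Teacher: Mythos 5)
Your proposal is correct and follows essentially the same route the paper intends: the corollary is stated as an immediate consequence of the preceding proposition, whose proof is exactly your substitution $z=\theta'+\tau=0$ into (\ref{qhavectors}) yielding $\vec{h}=\mp\vec{T}$, after which the $h$-slant definition and Lancret's characterization of a general helix coincide. Your added remarks on the nondegeneracy condition $\kappa\cos\theta\neq 0$ and the local constancy of the sign are a careful refinement of a point the paper passes over silently, but they do not change the argument.
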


	%%%%%%%%%%%%%%%%%%%%%%%%%%%%%%%%%%%%%%%%%%%%%%%%%%%%%%%%%%%%%%%%%%%%%%%%%%%%%%	
	%%%%%%%%%%%%%%%%%%%%%%%%%%%%%%%%%%%%%%%%%%%%%%%%%%%%%%%%%%%%%%%%%%%%%%%%%%%%%%
	%%%%%%%%%%%%%%%%%%%%%%%%%%%%%%%%%%%%%%%%%%%%%%%%%%%%%%%%%%%%%%%%%%%%%%%%%%%%%%
	%%%%%%%%%%%%%%%%%%%%%%%%%%%%%%%%%%%%%%%%%%%%%%%%%%%%%%%%%%%%%%%%%%%%%%%%%%%%%%

	\section{Examples}
	
	In this section, we give some examples for the obtain results in the previous section.
	
	\begin{example}
		Let take the $z$-axis as the base curve. Then, $\vec{\alpha}_1 (s) = (0,0,s)$. By choosing $a_1 (s) = \cos s$, $a_2 (s) = \sin s$, we have $\vec{q}_n = (\cos s, \sin s, 0)$ and GNR-surface $F_{1(\alpha_1, q_n)} (s,u) = (u \cos s, u \sin s, s)$ which is a right helicoid given in Figure \ref{fig1}. For this surface, it is obtained $f=-1$ and $g=1$ which gives that $F_{1(\alpha_1, q_n)}$ is a regular and non-developable GNR-surface. The graph of the base curve $\alpha_1$ is also given in Figure \ref{fig1} colored red.
	\end{example}
	\begin{figure}
		\centering
		\includegraphics[width=0.5\linewidth]{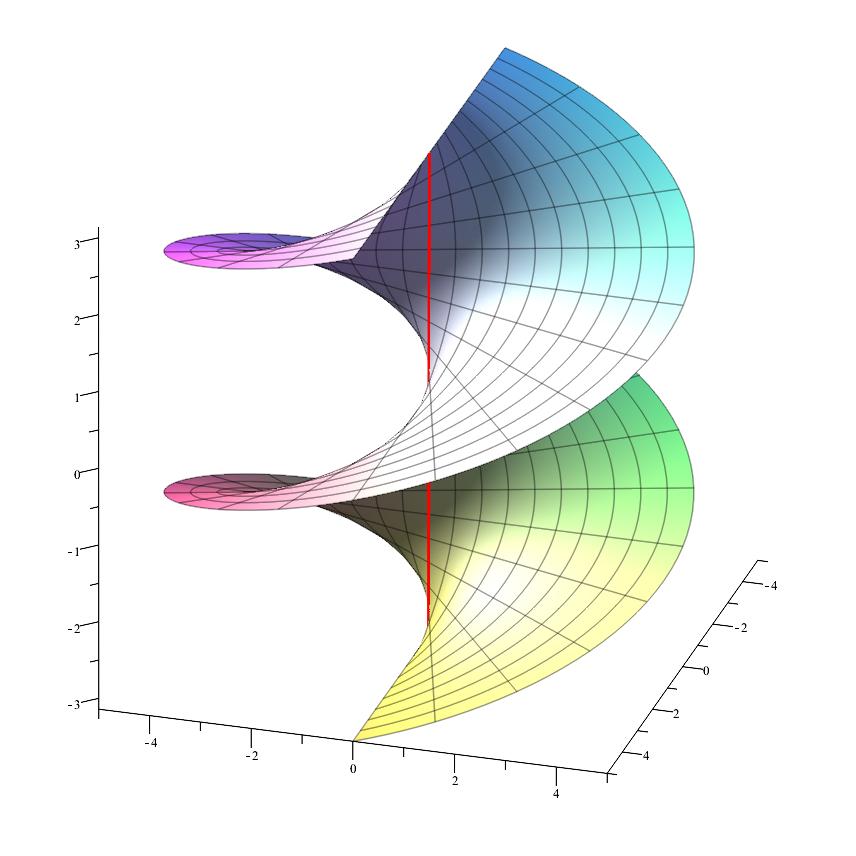}
		\caption{The GNR-surface $F_{1(\alpha_1, q_n)}$}
		\label{fig1}
	\end{figure}
	
	\begin{example}

		Let the GNR-surface $F_{2(\alpha_2, q_n)}$ be given by the parametrization
		\begin{equation*}
			\vec{F}_{2(\alpha_2, q_n)} (s,u)= \vec{\alpha_2} (s) + u \vec{q}_n (s)
		\end{equation*}
		where $\alpha_2(s)$ is a cylidrical helix given by the parametric form
		\begin{equation*}
			\vec{\alpha_2} (s) = \left( \cos \left( \frac{s}{\sqrt{2}} \right), \sin \left( \frac{s}{\sqrt{2}} \right), \frac{s}{\sqrt{2}}  \right)
		\end{equation*}
		The Frenet vectors of $\alpha_2$ are
		\begin{equation*}
			\begin{split}
				\vec{T}(s) & = \left( -\frac{1}{\sqrt{2}} \sin \left( \frac{s}{\sqrt{2}} \right), \frac{1}{\sqrt{2}} \cos \left( \frac{s}{\sqrt{2}} \right), \frac{1}{\sqrt{2}} \right),\\
				\vec{N}(s) & = \left( -\cos \left( \frac{s}{\sqrt{2}} \right), -\sin \left( \frac{s}{\sqrt{2}} \right), 0 \right),\\
				\vec{B}(s) & = \left( \frac{1}{\sqrt{2}} \sin \left( \frac{s}{\sqrt{2}} \right), -\frac{1}{\sqrt{2}} \cos \left( \frac{s}{\sqrt{2}} \right), \frac{1}{\sqrt{2}} \right).
			\end{split}
		\end{equation*}
		By choosing $a_1=\sin \left( \frac{s}{2} \right)$,  $a_2=\cos \left( \frac{s}{2} \right)$, we get
		\begin{equation*}
			\vec{q}_n (s) = \left( -\frac{1}{\sqrt{2}} \cos \left( \frac{s}{\sqrt{2}} \right) + \frac{1}{2} \sin \left( \frac{s}{\sqrt{2}} \right), -\frac{1}{\sqrt{2}} \sin \left( \frac{s}{\sqrt{2}} \right) - \frac{1}{2} \cos \left( \frac{s}{\sqrt{2}} \right), \frac{1}{2} \right)
		\end{equation*}
		and $f=0$, $g=1-\frac{1}{2} u \sin \left( \frac{s}{2} \right)$. For the function $g$ to be zero, we get $u=\frac{2}{\sin \left( \frac{s}{2} \right)}$. Then, the locus of singular points of GNR-surface $F_{2(\alpha_2, q_n)}$ are given by the curve $\gamma (s) = (\gamma_1 (s), \gamma_2 (s) \gamma_3 (s))$ where
		\begin{equation*}
			\begin{split}
				\gamma_1 (s) & = \sqrt{2} \cot \left( \frac{s}{2} \right) \sin \left( \frac{s}{\sqrt{2}} \right) - \cos \left( \frac{s}{\sqrt{2}} \right),\\
				\gamma_2 (s) & = - \sqrt{2} \cot \left( \frac{s}{2} \right) \cos \left( \frac{s}{\sqrt{2}} \right) - \sin \left( \frac{s}{\sqrt{2}} \right),\\
				\gamma_3 (s) & = \frac{s}{\sqrt{2}} + \sqrt{2} \cot \left( \frac{s}{2} \right).
			\end{split}
		\end{equation*}
		The graph of GNR-surface $F_{2(\alpha_2, q_n)}$ is given in Figure \ref{fig2}. In the same figure, the graphs of base curve $\alpha_2$ and locus of singular points of $F_{2(\alpha_2, q_n)}$ are also given by colored red and blue, respectively.
	\end{example}
	
	\begin{figure}
		\centering
		\begin{subfigure}{.5\textwidth}
			\centering
			\includegraphics[width=.9\linewidth]{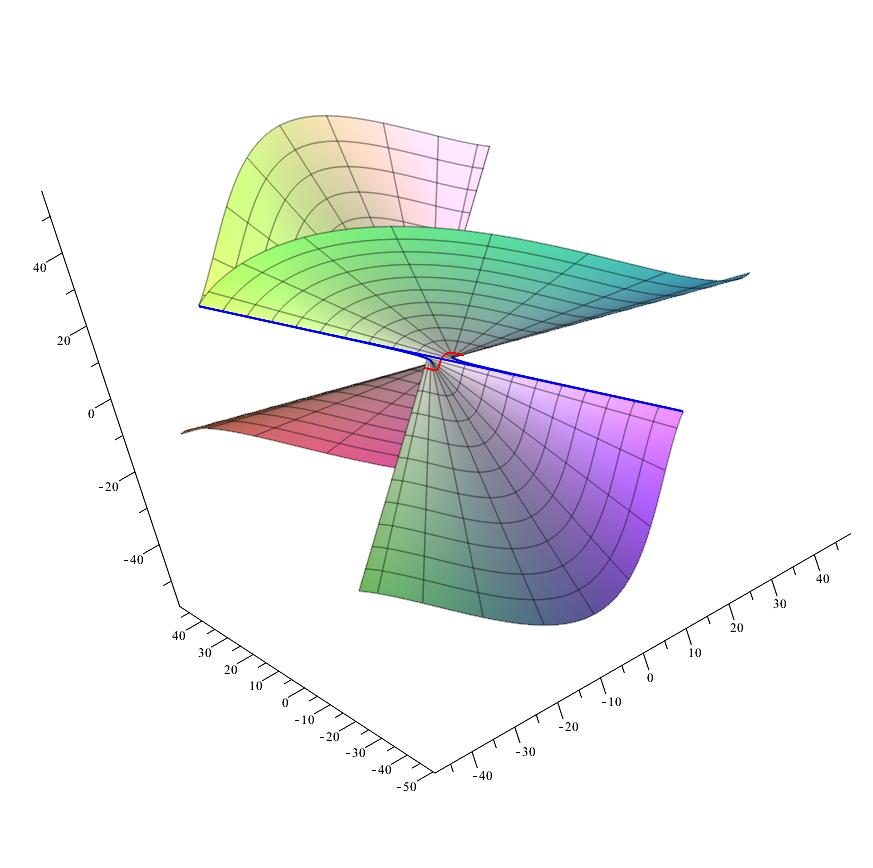}
		\end{subfigure}%
		\begin{subfigure}{.5\textwidth}
			\centering
			\includegraphics[width=.9\linewidth]{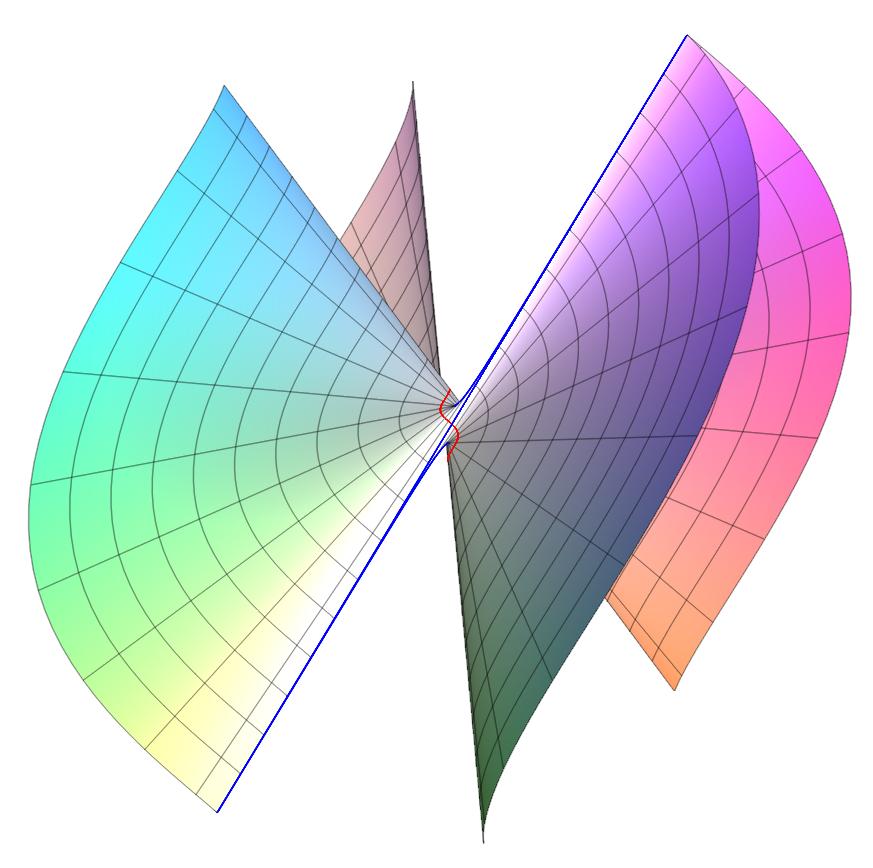}
		\end{subfigure}
		\caption{Two different views of the GNR-surface $F_{2(\alpha_2, q_n)}$}
		\label{fig2}
	\end{figure}

	\begin{example}
		Let the GNR-surface $F_{3(\beta, q_n)}$ be given by the parametrization
		\begin{equation*}
			\vec{F}_{3(\beta, q_n)} (s,u)= \vec{\beta} (s) + u \vec{q}_n (s)
		\end{equation*}
where $\vec{\beta} (s) = (\beta_1 (s), \beta_2 (s), \beta_3 (s))$ is a special curve known as pedal curve with
		\begin{equation*}
			\begin{split}
				\beta_1 (s) & = \frac{3}{2} \cos \left( \frac{s}{2} \right) + \frac{1}{6} \cos \left( \frac{3s}{2} \right),\\
				\beta_2 (s) & = \frac{3}{2} \sin \left( \frac{s}{2} \right) + \frac{1}{6} \sin \left( \frac{3s}{2} \right),\\
				\beta_3 (s) & = \sqrt{3} \cos \left( \frac{s}{2} \right).
			\end{split}
		\end{equation*}
		The Frenet vectors of $\beta$ are given by
		\begin{equation*}
			\begin{split}
				\vec{T} (s) & = \left( -\frac{3}{4} \sin \left( \frac{s}{2} \right) -\frac{1}{4} \sin \left( \frac{3s}{2} \right), \cos^3 \left( \frac{s}{2} \right), -\frac{\sqrt{3}}{2} \sin \left( \frac{s}{2} \right) \right),\\
				\vec{N} (s) & = \left( -\frac{\sqrt{3}}{2} \left( 2 \cos^2 \left( \frac{s}{2} \right) - 1 \right), -\sqrt{3} \cos \left( \frac{s}{2} \right) \sin \left( \frac{s}{2} \right), - \frac{1}{2} \right),\\
				\vec{B} (s) & = \left( \frac{1}{2} \cos \left( \frac{s}{2} \right) \left( 2 \cos^2 \left( \frac{s}{2} \right) - 3 \right), - \sin^3 \left( \frac{s}{2} \right), \frac{\sqrt{3}}{2} \cos \left( \frac{s}{2} \right) \right).
			\end{split}
		\end{equation*}
		If we take $a_1(s) = \cos \left( \frac{s}{2} \right)$ and $a_2 (s) = \sin \left( \frac{s}{2} \right)$, we get $\vec{q}_n (s) = (q_1(s), q_2(s), q_3(s))$ where
		\begin{equation*}
			\begin{split}
				q_1(s) & = \frac{1}{2} \cos \left( \frac{s}{2} \right) \left( 2 \sin \left( \frac{s}{2} \right) \cos^2 \left( \frac{s}{2} \right) -2\sqrt{3} \cos^2 \left( \frac{s}{2} \right) -3 \sin \left( \frac{s}{2} \right) + \sqrt{3} \right),\\
				q_2(s) & = \sin \left( \frac{s}{2} \right) \left( \sin \left( \frac{s}{2} \right) \cos^2 \left( \frac{s}{2} \right) -\sqrt{3} \cos^2 \left( \frac{s}{2} \right) - \sin \left( \frac{s}{2} \right) \right),\\
				q_3(s) & = \frac{1}{2} \cos \left( \frac{s}{2} \right) \left( \sqrt{3} \sin \left( \frac{s}{2} \right) - 1 \right),
			\end{split}
		\end{equation*}
		and $f(s) = \frac{\sqrt{3}}{2} \sin \left( \frac{s}{2} \right) - \frac{1}{2}$, $g(s,u) = 1 - \frac{\sqrt{3}}{2} u \cos^2 \left( \frac{s}{2} \right)$. Assuming $k \in \mathbb{Z}$, for the points
		\begin{equation*}
			(s_0, u_0) = \left( 2 \arcsin \left( \frac{1}{\sqrt{3}} \right) -4k\pi, \frac{2}{\sqrt{3} \cos^2 \left( \arcsin \left( \frac{1}{\sqrt{3}} \right) -2k\pi \right)} \right),
		\end{equation*}
		and
		\begin{equation*}
		(s_1, u_1) = \left( 2 \pi - 2 \arcsin \left( \frac{1}{\sqrt{3}} \right) +4k\pi, \frac{2}{\sqrt{3} \cos^2 \left( \pi - \arcsin \left( \frac{1}{\sqrt{3}} \right) +2k\pi \right)} \right),
		\end{equation*}
		we get $f=g=0$ and therefore these points are singular points on the GNR-surface $F_{3(\beta, q_n)}$. Therefore, except the points $(s_0, u_0)$ and $(s_1, u_1)$, the surface is regular. The graph of the GNR-surface $F_{3(\beta, q_n)}$ is given by Figure \ref{fig3}. The graphs of the base curve $\beta$ and the images of singular points of the surface are also given in Figure \ref{fig3} colored red and blue, respectively.
	\end{example}
	
	\begin{figure}
		\centering
		\begin{subfigure}{.5\textwidth}
			\centering
			\includegraphics[width=.9\linewidth]{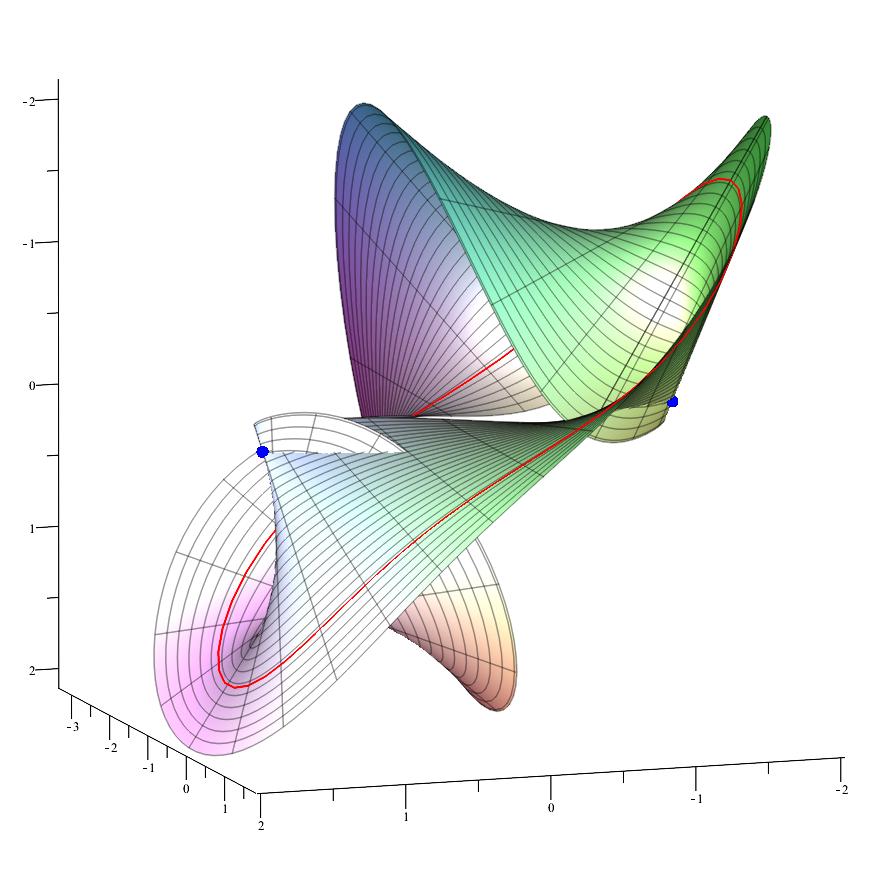}
		\end{subfigure}%
		\begin{subfigure}{.5\textwidth}
			\centering
			\includegraphics[width=.9\linewidth]{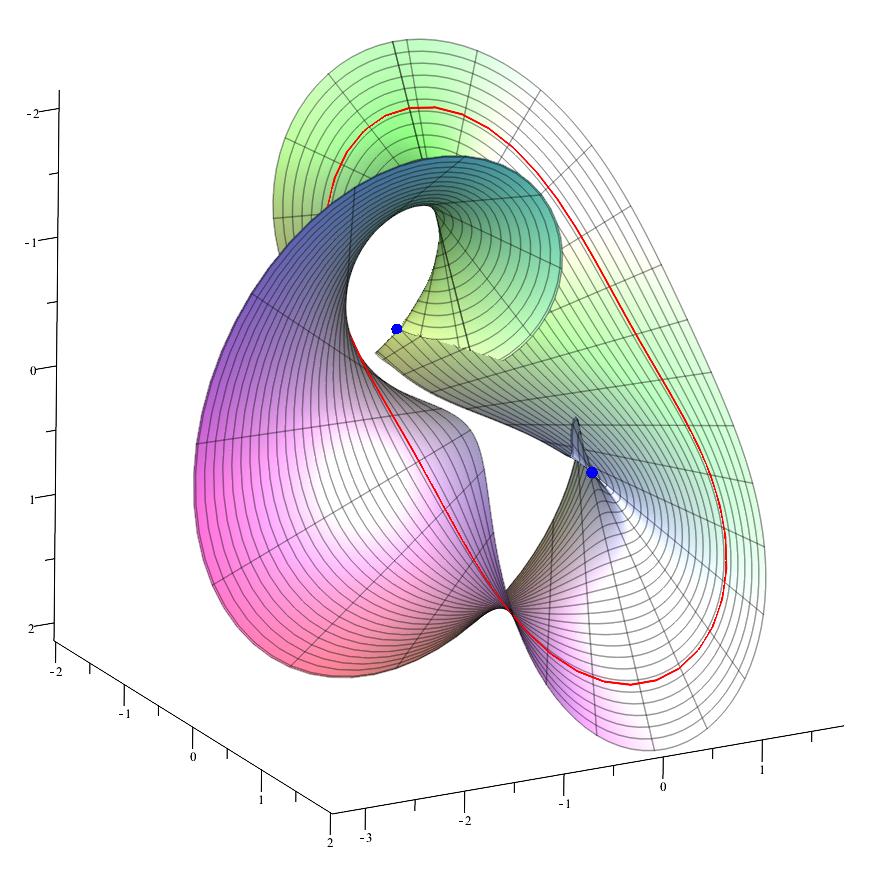}
		\end{subfigure}
		\caption{Two different views of the GNR-surface $F_{3(\beta, q_n)}$}
		\label{fig3}
	\end{figure}

	\begin{figure}
		\centering
		\includegraphics[width=0.5\linewidth]{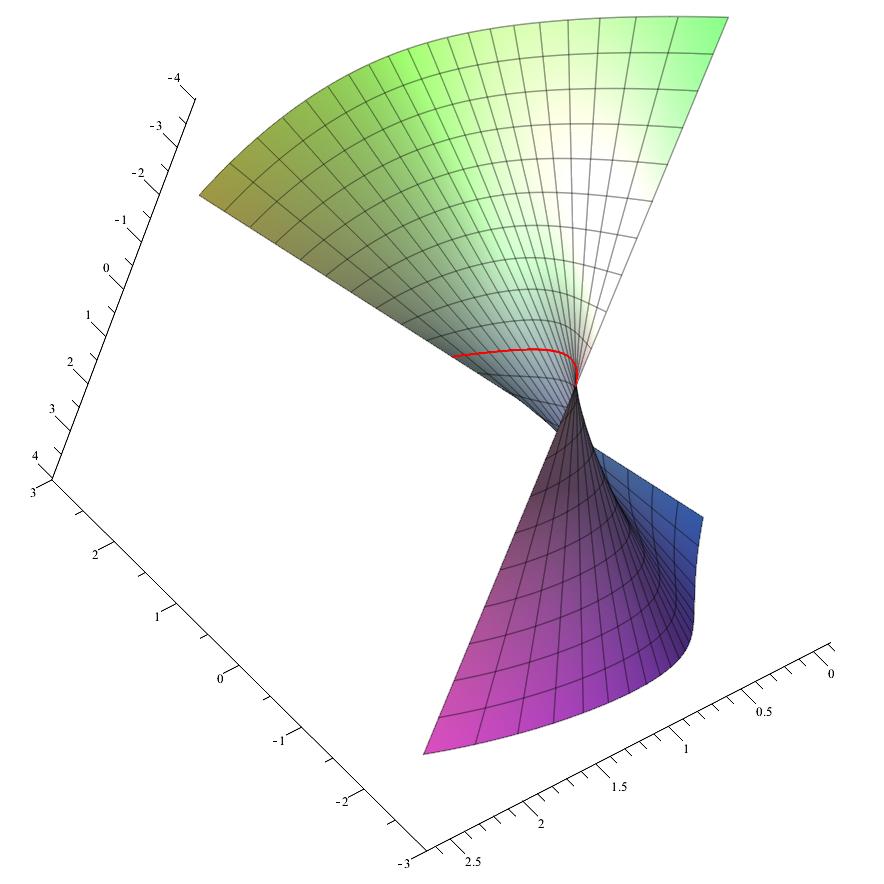}
		\caption{The GNR-surface $F_{4(\alpha_3, q_n)}$}
		\label{fig4}
	\end{figure}
		
	\begin{example}
		Let the GNR-surface $F_{4(\alpha_3, q_n)}$ be given by the parametrization
		\begin{equation*}
			\vec{F}_{4(\alpha_3, q_n)} (s,u)= \vec{\alpha_3} (s) + u \vec{q}_n (s)
		\end{equation*}
		where
		\begin{equation*}
			\vec{\alpha_3} (s) = \left( \sqrt{1+s^2}, s, \ln \left( s + \sqrt{1+s^2} \right) \right).
		\end{equation*}
		The Frenet vectors of $\alpha_3$ are
		\begin{equation*}
			\begin{split}
				\vec{T} & = \left( \frac{\sqrt{2} s}{2 \sqrt{s^2+1}}, \frac{\sqrt{2}}{2}, \frac{\sqrt{2}}{2 \sqrt{s^2+1}} \right),\\
				\vec{N} & = \left( \frac{1}{\sqrt{s^2+1}}, 0, - \frac{s}{\sqrt{s^2+1}} \right),\\
				\vec{B} & = \left( -\frac{\sqrt{2} s}{2 \sqrt{s^2+1}}, \frac{\sqrt{2}}{2}, -\frac{\sqrt{2}}{2 \sqrt{s^2+1}} \right).
			\end{split}
		\end{equation*}
		By assuming $a_1 (s) = s$ and $a_2 (s) = \sqrt{1-s^2}$, we get
		\begin{equation*}
			\vec{q}_n = \left( \frac{s}{\sqrt{s^2+1}} - \frac{\sqrt{2}}{2} \frac{s\sqrt{1-s^2}}{\sqrt{s^2+1}}, \frac{\sqrt{2}}{2} \sqrt{1-s^2}, -\frac{s^2}{\sqrt{s^2+1}} - \frac{\sqrt{2}}{2} \frac{\sqrt{1-s^2}}{s^2+1} \right),
		\end{equation*}
		and
		\begin{equation*}
			f(s) = \frac{2s^2 - \sqrt{1-s^2} + 2}{2 (s^2+1) \sqrt{1-s^2}}, \hspace{6pt} g(s,u)=\frac{2s^2 - u \sqrt{1-s^2} + 2}{2 (s^2+1)}.
		\end{equation*}
		Then, the surface is regular and non-developable. The graph of GNR-surface $F_{4(\alpha_3, q_n)}$ is given by Figure \ref{fig4}.
	\end{example}

	%%%%%%%%%%%%%%%%%%%%%%%%%%%%%%%%%%%%%%%%%%%%%%%%%%%%%%%%%%%%%%%%%%%%%%%%%%%%%%
	%%%%%%%%%%%%%%%%%%%%%%%%%%%%%%%%%%%%%%%%%%%%%%%%%%%%%%%%%%%%%%%%%%%%%%%%%%%%%%
	%%%%%%%%%%%%%%%%%%%%%%%%%%%%%%%%%%%%%%%%%%%%%%%%%%%%%%%%%%%%%%%%%%%%%%%%%%%%%%
	%%%%%%%%%%%%%%%%%%%%%%%%%%%%%%%%%%%%%%%%%%%%%%%%%%%%%%%%%%%%%%%%%%%%%%%%%%%%%%

\end{document}